\newcommand{\joel}[1]{\textcolor{black}{#1}}
\newtheorem{theorem}{Theorem}[section]
\newtheorem{lemma}[theorem]{Lemma}
\newtheorem{corollary}[theorem]{Corollary}
\newtheorem{definition}[theorem]{Definition}
\title{The connectivity of graphs of graphs with self-loops and a given degree sequence}
\date{}        
\begin{document}
\maketitle

\centerline{\scshape Joel Nishimura}
\medskip
{\footnotesize
 \centerline{School of Mathematical and Natural Sciences}
   \centerline{ Arizona State University, Glendale, AZ 85306-4908, USA}
}

\begin{abstract}
`Double edge swaps' transform one graph into another while preserving the graph's degree sequence, and have thus been used in a number of popular Markov chain Monte Carlo (MCMC) sampling techniques.  However, while double edge-swaps can transform, for any fixed degree sequence, any two graphs inside the classes of simple graphs, multigraphs, and pseudographs, this is not true for graphs which allow self-loops but not multiedges (loopy graphs).  Indeed, we exactly characterize the degree sequences where double edge swaps cannot reach every valid loopy graph and develop an efficient algorithm to determine such degree sequences. The same classification scheme to characterize degree sequences can be used to prove that, for all degree sequences, loopy graphs are connected by a combination of double and triple edge swaps. Thus, we contribute the first MCMC sampler that, asymptotically, uniformly samples loopy graphs with any given sequence.  
\end{abstract}

\section{Introduction}

Understanding what properties of an empirical graph are noteworthy, as opposed to those which are merely the consequence of the degree sequence, is often addressed by comparing the empirical graph with an ensemble of sampled graphs with the same degree sequence \cite{molloy1995critical,newman2001random}.  While uniformly sampling graphs with a fixed degree sequence seems straightforward, it can be surprisingly complex.   
How one samples and the resulting graph statistics are dependent on the space of graphs considered: e.g.~whether self-loops and/or multiedges are considered, and whether graphs with distinct `stub-labelings' are considered unique \cite{carstens2016switching,MRCconfiguration}. 

Graphs which allow self-loops can arise in many disparate applications.  For example, self-loops may represent: an author citing themselves; a protein capable of interacting with itself \cite{Ito10042001,yu2008high}; gene operon self-regulation \cite{shen2002network}; cannibalism in a food web \cite{zander2011food}; users on photo sharing site Flickr linking to themselves; a loop road or cul-de-sac in a road network \cite{demetrescu20069th}; a repeated word in a word adjacency network; traffic flow inside an autonomous system on the Internet \cite{leskovec2007graph},  along with many other possible interpretations.  Considering networks which may include self-loops can be important both because self-loops are often of interest themselves, and because the inclusion of self-loops effectively reduces the number of edges that aren't self-loops, potentially affecting many different network statistics, especially in small networks.  Moreover, while it is commonly thought that self-loops are 
 asymptotically rare \cite{bollobas1980probabilistic}, they only are for particular assumptions on degree sequences, and are more rare in `stub labeled' spaces as thoroughly detailed in  \cite{MRCconfiguration}. In contrast, a so called `vertex-labeled' graph is  more likely to have self-loops, yet techniques for sampling from this space are largely undeveloped {\cite{MRCconfiguration}}. This paper discusses loopy graphs, graphs where each vertex can have at most a single self-loop and edges are either present or absent (i.e.~no multiedges). 

For many different types of graphs, one of the most popular sampling techniques 
is Markov chain Monte Carlo sampling via `double edge swaps' or the more recent `curveball' alteration \cite{strona2014fast,carstens2016curveball,carstens2015Proof}.  Confidence in these MCMC methods rest upon two considerations: whether the stationary distribution of the Markov chain is uniform; and how quickly the chain approximates its stationary distribution as commonly measured by the mixing time. 
{
There have been recent advances in proving polynomial mixing times for chains on simple graphs with constrained degree sequences \cite{cooper2007sampling,greenhill2015switch,mckay1990uniform}, suggesting that future analytical results for loopy graphs are possible.  In the meantime,
}
 there are established numerical methods of accessing the convergence of Markov chains, such as though based on autocorrelation. 
 { In contrast,   }
  determining whether double edge swaps result in a uniform distribution seemingly requires an analytic proof {and such } guarantees are founded on several properties, the most difficult of which is whether an MCMC sampler can sample every possible graph, or equivalently, whether the associated Markov chain is irreducible (equivalently, the associated graph of the Markov chain is strongly connected). For any degree sequence, the following spaces are connected and thus can be sampled using MCMC techniques: simple graphs \cite{zhang2010traversability, bienstock1994degree, berge1962theory, eggleton1981simple, taylor1981constrained}, simple connected graphs \cite{taylor1981constrained,bienstock1994degree}, multigraphs \cite{hakimi1963realizability} and multigraphs with self-loops \cite{eggleton1979graph}.   
Absent from this list is the space of loopy graphs. Indeed, for some degree sequences, the standard MCMC approach applied to the space of loopy graphs cannot sample all possible such graphs.  In this paper, we investigate which degree sequences have disconnected Markov chains, developing an algorithm that can detect this disconnectivity and prove that augmenting the standard MCMC `double edge swap' with `triple edge swaps' guarantees the chain is connected for all degree sequences.
These techniques allow the space of loopy graphs to be used in the study of empirical networks.

\section{The graph of loopy-graphs}

Consider a graph with self-loops $G=(V,E)$, with $n$ vertices in vertex set $V$ and edge set $E$, which may or may not include self-loops: edges of the form $(u,u)$. 
Notice that loopy-graphs include simple graphs as a special case.
 As opposed to multigraphs, edges can appear at most once in $E$.   
For a vertex $u$, we denote the set of adjacent vertices, or `neighbors' of $u$, as $N(u)$, and we refer to {$k_u=|N(u)|$} as the degree of vertex of $u$.   We adopt the convention that each self-loops contributes two to a {vertex's} degree\footnote{{Consider modifications of the method in \cite{carstens2016curveball} if a self-loop contributes only one to a vertex's degree.  }}.  

\begin{figure}[htbp]
  \centering
	\includegraphics[width=\linewidth]{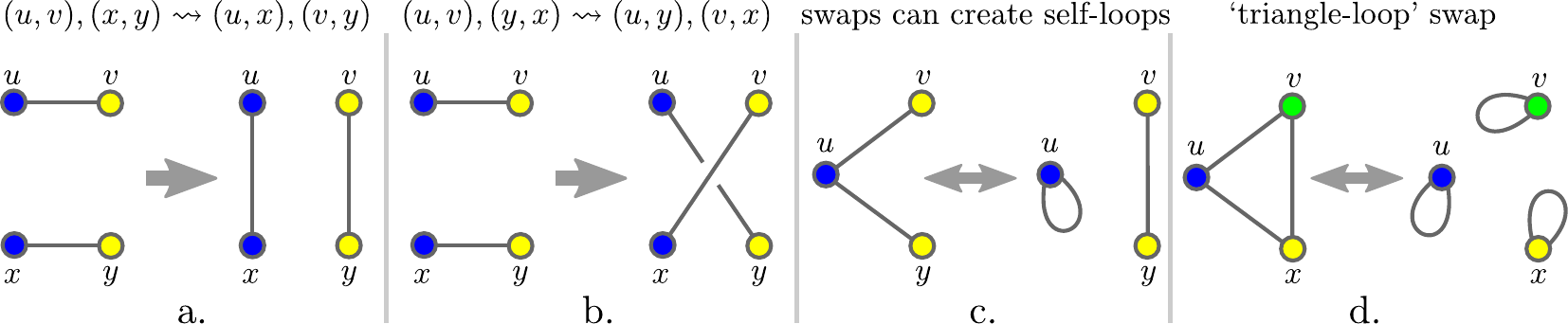} 
	\label{doubleEdgeSwap} %
	\caption{(a, b) For any pair of edges there are two possible double edge swaps. (c)  Swapping adjacent edges creates self-loops, and swaps which involve a single self-loop can remove it. (d) In Theorem \ref{tripleSwaps} we show that including a triple edge swap in addition to double edge swaps leads to a connected graph of loopy graphs $\mathcal{G}_\triangle$.  }
\end{figure}

Transforming one graph into another with the same degree sequence is possible through a double edge swap \cite{petersen1891theorie}, where, as in figure \ref{doubleEdgeSwap}, swapping edges $(u,v)$ and $(x,y)$ replaces those edges with $(u,x)$ and $(v,y)$, a process we denote as $(u,v),(x,y)\leadsto (u,x),(v,y)$.  Repeatedly performing double edge swaps (resampling the current graph whenever a proposed swap would create a multiedge) is the basis of MCMC samplers of loopy graphs.  Conceptually, repeatedly performing double edge swaps is a random walk on a graph whose vertices are loopy graphs, with the same prescribed degree sequence.  
Specifically, 
{
this allows us to develop a notion of a `graph of loopy graphs':
\begin{definition}[graph of loopy graphs, $\mathcal{G}(\{k_u\})$]
\label{def:GoG}
For a given degree sequence $\{k_u\}$ let $\mathcal{G}(\{k_u\})=\{\mathcal{V},\mathcal{E} \}$ be the graph of loopy graphs under double edge-swaps, where vertex set $\mathcal{V}$ contains each possible loopy graph with degree sequence $\{k_u\}$ and edge set $\mathcal{E}$ contains $(G_i,G_j)$ if and only if there is a single double edge swap that takes $G_i$ to $G_j$.
\end{definition}
}
\noindent Showing that a MCMC sampler can sample from graphs with degree sequence $\{k_u\}$ requires showing that $\mathcal{G}(\{k_u\})$ is connected, otherwise random walks will not be able to reach all possible loopy-graphs.

In fact though, the space of loopy-graphs is not connected under double edge-swaps for every possible degree sequence. In  section \ref{counterExamples}, we introduce two classes of graphs, $Q_1$ and $Q_2$ such that if $\mathcal{G}$ contains a loopy-graph $G\in Q_1\cup Q_2$, then $\mathcal{G}$ is disconnected.  Both classes $Q_1$ and $Q_2$ require high degree nodes, and in section \ref{detecting} we discuss tests to determine whether a given degree sequence can create a graph in $Q_1$ or $Q_2$.  

Moreover,  $Q_1$ and $Q_2$ exactly characterize the graphs which cause $\mathcal{G}$ to be disconnected, as shown in the first main theorem, Theorem \ref{Q1Q2Exact} in section \ref{ghat}.  
In contrast, any two graphs not in $Q_1$ or $Q_2$ are connected with each other, and this will be established by studying special maximal elements of $\mathcal{G}$.

 The general outline is as follows: from any graph $G_i$, let $\hat{G}_i$ be the graph {in the same connected component of $\mathcal{G}$ as} $G_i$ with the maximum number of self-loops ({see definition \ref{def:Ghat} for} for additional technical {requirements}), as in Figure \ref{sampleGraphs}.

 Next, we utilize the following classification of vertices inside a graph:
 \begin{definition}[$V^k$]
 \label{def:Vk}
For graph $G$, let $V^0$ be the set of all vertices in $G$ that lack self-loops   (i.e. $V^0 = \{u|(u,u)\not \in E  \}$) . Let $V^k$ be the set of all vertices with a shortest path distance of $k$ from any vertex in $ V^0$. Let $V^\infty$ be those vertices which are disconnected to any vertex in $V^0$.
\end{definition}
 
\noindent Based on the largest clique $K^0\subseteq V^0$ {(see definition \ref{def:Kzero})}, we classify the structure of $\hat{G}_i$ as one of five different types (see definition \ref{def:Ghatd} for additional technical requirements), (four types are displayed in Figure \ref{GdForm}), two of which ($\hat{G^3}$ and $\hat{G}^d$, $d>3$) belong to $Q_1$ and $Q_2$.  

\begin{figure}[htbp]
  \centering
	\includegraphics[width=.95\linewidth]{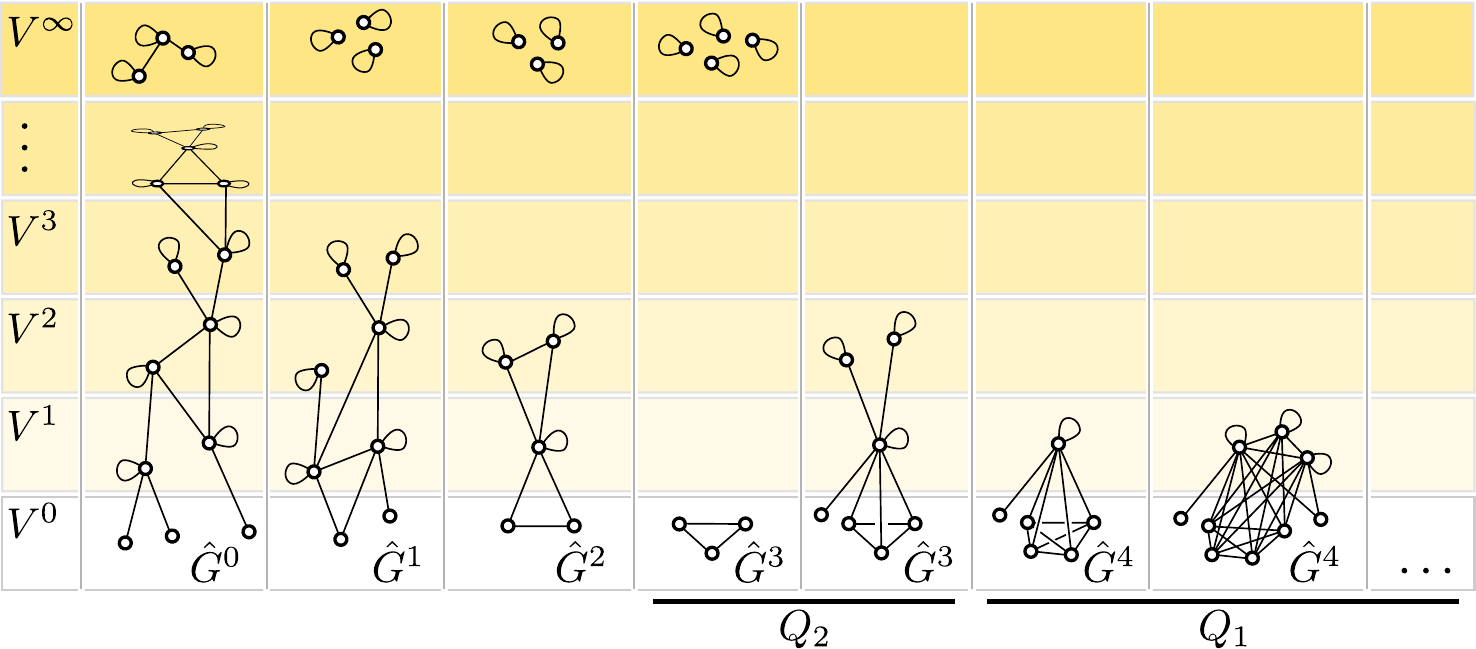} 
	\label{sampleGraphs} 
	\caption{Each of the above graphs has the maximal number of self-loops accessible through double edge swaps. Vertices are categorized by their distance to a vertex without a self-loop and graphs are labeled by the size of the largest clique in $V^0$ {such that a graph $G^d$ has an $d$-clique in $V^0$ (see definition \ref{def:Ghatd})}. Graphs with cliques of size $3$ or greater in $V^0$ are of class $Q_1$ or $Q_2$, as shown by theorems \ref{G3inQ} {and \ref{GdinQ}}.   }
\end{figure}

The categorization of possible structures of $\hat{G}$ suggests Algorithm \ref{alg1} which determines whether a degree sequence has a connected or disconnected $\mathcal{G}$.   
Another consequence of Theorem \ref{Q1Q2Exact} is that any degree sequence $\{k_u \}$ that is disconnected, is disconnected because there are graphs with triangles which cannot be changed into self-loops, which naturally suggests Theorem \ref{tripleSwaps}, which states that the space of graphs with self-loops is connected under the combination of double and triple edge swaps.
 Based on this theorem we suggest a MCMC approach that uniformly samples graphs with self-loops and a fixed degree sequence.
 
\section{Degree sequences with disconnected $\mathcal{G}$} \label{counterExamples}

First we consider a simple disconnected case, which establishes that for some degree sequences $\mathcal{G}$ is not connected.

\subsection{Cycles and cliques}

The simplest example of a degree sequence that is not connected is $\{2,2,2\}$, which can be wired either as a triangle, or as $3$ self-loops.  Since there are no valid double edge swaps of either the triangle graph or $3$ self-loops (all swaps would create multiedges) the space is disconnected.  The disconnectivity of $\{2,2,2\}$ can be extended in two ways, to larger cycles and to larger cliques. The degree sequence of a cycle, $\{2,2,...,2\}$ clearly has a disconnected space, since a graph composed only of nodes with self-loops has no valid double edge swaps.  Similarly a clique with additional self-loops on no more than $n-3$ vertices has alternate configurations, but lacks any valid double edge swaps, implying that the degree sequence: $\{n+1,...,n+1,n-1,...,n-1,n-1  \}$ is also disconnected.  

As a useful exercise, we consider the structure of $\mathcal{G}(\{2,2,....,2\})$ in more detail. Any graph with degree sequence  $\{2,2,....,2\}$ is composed of isolated self-loops and cycles of length at least $3$. Further, any valid double edge swap either:
\begin{enumerate}
\item creates a self-loop and reduces a $k$ cycle, $k\ge 4$, to a $k-1$ cycle (swapping adjacent edges);
\item combines a self-loop with a $k$ cycle to create a $k+1$ cycle (swapping a self-loop and an edge in a cycle);
\item merges two cycles into a larger cycle (swapping edges in separate cycles);
\item cuts a cycle into two smaller cycles, each with length at least $3$ (non-adjacent edges in the same cycle);
\item swaps two edges in the same cycle without changing its length (non-adjacent edges in the same cycle).
\end{enumerate}

If double edge swaps are augmented with a triple edge swap that takes a triangle to three self-loops (and another triple edge swap that does the reverse), then it is clear that every graph in the space can be taken to the graph made entirely of self-loops (and thus $\mathcal{G}$ is connected) via the following procedure:
\begin{enumerate}
\item by swapping edges in different cycles, combine all cycles into a single long cycle;
\item from the graph's one cycle, swap adjacent edges to create self-loops until the single cycle has length $3$;
\item use a triple edge swap to replace the only length $3$ cycle with $3$ self-loops.
\end{enumerate}

\subsection{Other disconnected graphs}

These disconnected examples will be generalized into two classes of graphs  $Q_1$ and $Q_2$, displayed in Figure \ref{FigQ1Q2}, which generalize the problems with the clique and the cycle respectively. In section \ref{ghat} we show that $Q_1$ and $Q_2$ describe all disconnected graphs.

\begin{figure}[htbp]
  \centering
	\includegraphics[width=.95\linewidth]{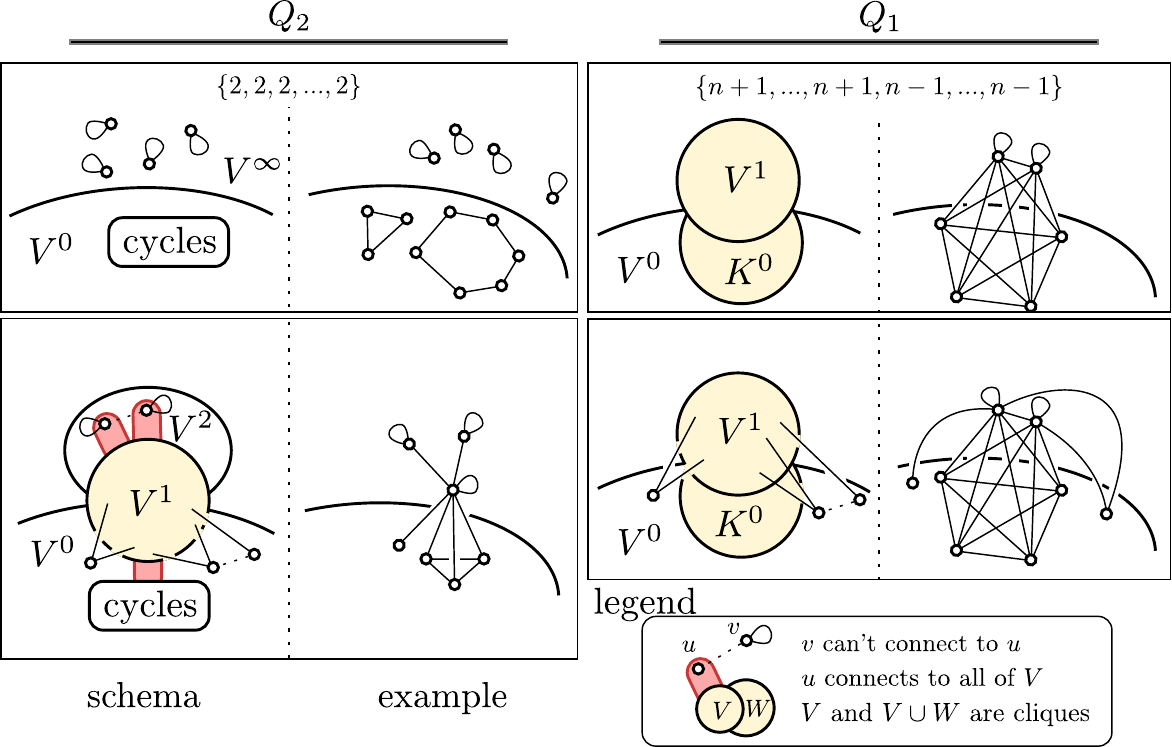}  
	\caption{Both the degree sequence $\{n+1,...,n+1,n-1,...,n-1 \}$ and $\{2,2,...,2\}$ have a disconnected $\mathcal{G}$ whose disconnectivity can be generalized to classes $Q_1$ and $Q_2$. The schematic for $Q_2$ includes  $\{2,2,...,2\}$ as a special case if when $V^1$ is empty, $V^2$ is relabeled as $V^\infty$. \label{FigQ1Q2} }
\end{figure}

   \begin{definition}[$Q_1$]
A graph $G$ is of class $Q_1$ when the following conditions are true of $G$:
\begin{enumerate}
\item There exists a clique $K^0$ in $V^0$ with $|K^0|\ge 4$ (recall: $V^0$ is the set of nodes without self-loops) 
\item For any $u\in V^0$, either $u$ has no neighbors in $V^0$ or $u$ is in the clique $K^0$,
\item $V^1\cup K^0$ is a clique,
\item $V^2 = V^\infty = \emptyset$.
\end{enumerate}
\end{definition}
We will later show that all $\hat{G}^d$, $d>3$ are of class $Q_1$.  The important feature of $Q_1$ is that it is closed under any double edge swap.

 \begin{lemma}
 For any two graphs $G_1$ and $G_2$ connected via a double edge swap, if $G_1\in Q_1$ then $G_2\in Q_1$. 
 \label{Q1closed}
 \end{lemma}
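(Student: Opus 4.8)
The plan is to first unpack the four conditions defining $Q_1$ into an explicit picture of $G_1$, and then show that this picture is rigid enough that the only double edge swaps it admits are harmless rewirings of one particular block of edges.

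Set $A = K^0$, $B = V^1$, $C = V^0 \setminus K^0$, so that $V = A \sqcup B \sqcup C$. Reading off conditions (1)--(4): $H := A \cup B$ induces a clique with $|A| \ge 4$; every vertex of $B$ carries a self-loop and no vertex of $A \cup C$ does; and, since condition~2 forbids a vertex of $C \subseteq V^0$ from having any neighbor in $V^0$ while $V^2 = V^\infty = \emptyset$, every neighbor of a $C$-vertex lies in $B$. I would isolate the two facts I expect to use repeatedly: a vertex $a \in A$ has $N(a) = H \setminus \{a\}$ exactly (in particular there are no $A$--$C$ edges), and $C$ induces no edges at all.

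The heart of the argument is to classify the valid swaps available to $G_1$, in two stages. \emph{Stage one: no valid swap of $G_1$ changes the set of self-loops.} If a swap is to destroy the loop at $v \in B$, that loop must be one of the two chosen edges; swapped against a vertex-disjoint non-loop edge $(x,y)$ it produces $(v,x),(v,y)$, and validity forces $x,y \notin H$ and $x,y \notin N(v)$, hence $x,y \in C$ --- but then $(x,y)$ would be a $C$--$C$ edge, which does not exist (and swapping the loop against another loop only produces a multiedge, while swapping it against an edge through $v$ does nothing). If a swap is to create a loop at $z$, it must use two edges $(z,q),(z,r)$ sharing $z$; by the two recorded facts, $q$ and $r$ then lie together in a clique ($H$ if $z \in A$, $B$ if $z \in C$; $z \in B$ is impossible since $z$ already has a loop), so the new edge $(q,r)$ already exists and the swap is invalid. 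Consequently every valid non-trivial swap uses two vertex-disjoint non-loop edges, each of which is a clique edge of $H$ or a $B$--$C$ edge. \emph{Stage two:} among these, two clique edges re-create clique edges under either re-pairing; a clique edge together with a $B$--$C$ edge re-creates a clique edge at the $B$-endpoint; and two $B$--$C$ edges $(u,w),(u',w')$ re-create the clique edge $(u,u')$ under one re-pairing, leaving only $(u,w),(u',w') \leadsto (u,w'),(u',w)$ --- a swap of two $B$--$C$ edges for two $B$--$C$ edges.

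To finish I would observe that such a swap changes only adjacencies between $B$ and $C$: it leaves $H$ a clique, leaves the self-loop set equal to $B$, and creates no $A$--$C$ or $C$--$C$ edge, so the very same partition $V = A \sqcup B \sqcup C$ certifies $G_2 \in Q_1$ (one should also note, using $|A| \ge 4 \ge 1$, that in $G_2$ the derived sets are again $V^0 = A \cup C$, $V^1 = B$, $V^2 = V^\infty = \emptyset$, with forced clique $K^0 = A$). The part that needs the most care --- and where the argument could go wrong if rushed --- is Stage one: being genuinely exhaustive about swaps in which the two chosen edges share a vertex or are self-loops, since those are exactly the configurations that could in principle delete a loop. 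What makes all of them collapse is precisely the clique $H$ (which forces any recreated edge to already be present) together with the total absence of $C$--$C$ edges.
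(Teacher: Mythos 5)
Your proof is correct and follows essentially the same route as the paper's: both classify every possible double edge swap using the facts that $K^0\cup V^1$ is a clique and that $V^0\setminus K^0$ has all its neighbors in $V^1$, conclude that the only valid nontrivial swaps exchange the $V^0$-endpoints of two $V^1$--$(V^0\setminus K^0)$ edges, and observe that such swaps preserve all four defining conditions. Your version is somewhat more exhaustive (explicitly ruling out loop-creating and loop-destroying swaps), but the underlying argument is the same.
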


      \begin{proof}
The structure of $Q_1$ implies that all edges have at least one endpoint in  $V^1\cup K^0$.   Since $V^1\cup K^0$ is a clique, there are thus no valid swaps involving any edge in $V^1\cup K^0$ as any such swap would create a multiedge. Similarly, a swap between a self-loop in $V^1$ and an edge from $V^0$ to $V^1$ would also create a multiedge. The only possible swaps are between two edges $(u,v)$ and $(x,y)$ where $u,x\in V^0\setminus K^0$ and $v,y\in V^1$.  Notice that swap $(u,v)(y,x)\leadsto (u,x),(y,v)$ is precluded by the presence of edge $(v,y)\in V^1$, while swap $(u,v)(x,y)\leadsto (u,y),(x,v)$ does not create a new edge in $V^0$, alter the fact that $V^1\cup K^0$ is a clique or create a vertex in $V^2$ or $V^\infty$.  Thus $Q_1$ is closed under edge swaps.   
 \end{proof}

We will later show that all $\hat{G}^3$ are of class $Q_1$.
 While $Q_1$ includes cliques as a special case, a similar structure, $Q_2$ generalizes the problems associated with cycles and degree sequences $\{2,2,2,...,2 \}$. 
 
 \begin{definition}[$Q_2$]
A graph $G$ is of class $Q_2$ when the following conditions are true of $G$:
\begin{enumerate}
\item There are at least three vertices in $V^0$ and there exists $u\in V^0$ such that $|N(u)\cap V^0| = 2$ 
\item For any $u\in V^0$, either $u$ has no neighbors in $V^0$, or $u$ has exactly two neighbors in $V^0$ and is adjacent to all of $V^1$.
\item $V^1$ is a clique
\item For any $u\in V^2$, $N(u)=V^1$,
\item $V^3 = \emptyset$,
\item Either $V^\infty$ is empty or both $V^1$ is empty and $k_u = 2$ for $u\in V^\infty$. 
\end{enumerate}
\end{definition}
Implicit in the definition of $Q_2$ is that there is a cycle in $V^0$ of length at least $3$. Similarly to $Q_1$, $Q_2$ is also closed under double edge swaps.
 \begin{lemma}
  For any two graphs $G_1$ and $G_2$ connected via a double edge swap, if $G_1\in Q_2$ then $G_2\in Q_2$.  \label{Q2closed}
 \end{lemma}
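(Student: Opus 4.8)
The plan is to follow the template of Lemma~\ref{Q1closed}: catalogue every kind of edge that can occur in a graph $G_1\in Q_2$, and for each unordered pair of such edges check the (at most two) double edge swaps, showing that each swap either creates a multiedge---and so is not a valid move---or produces a graph that again satisfies all six conditions of $Q_2$.

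First I would record the edge inventory of a $Q_2$ graph. By conditions 1 and 2 the subgraph induced on $V^0$ is a disjoint union of cycles and every vertex of $V^0$ is adjacent to all of $V^1$; together with conditions 3 and 4 this means the edges of $G_1$ are exactly: the cycle edges inside $V^0$; the complete bipartite edges between $V^0$ and $V^1$; the clique edges and self-loops inside $V^1$; the complete bipartite edges between $V^1$ and $V^2$ together with the self-loops at $V^2$; and, only when $V^1=\emptyset$, the isolated self-loops of $V^\infty$. The fact I would use repeatedly is that every vertex of $V^1$ is already adjacent to every vertex of $V^0\cup V^1\cup V^2$ (and to itself), so any swap that reroutes an edge incident to $V^1$ merely recreates an existing edge and is forbidden. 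A routine pass over all pairs of edges, at least one of which meets $V^1$, rules those out, as does the observation that swapping any two self-loops doubles an edge.

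What remains are swaps between edges disjoint from $V^1$: cycle edges of $V^0$, self-loops at $V^2$, and (when $V^1=\emptyset$) self-loops at $V^\infty$. Here I would dispatch the genuinely possible moves one at a time. Swapping two cycle edges in different $V^0$-cycles merges them; swapping two non-adjacent cycle edges of one $V^0$-cycle either rotates chords or splits the cycle into two shorter cycles (an outcome giving a length-$2$ cycle is a forbidden multiedge); all such moves keep every vertex with exactly two $V^0$-neighbours and leave the rest of the structure intact. Swapping two adjacent edges $(u,v),(v,w)$ of a $V^0$-cycle of length $\ge 4$ turns $v$ into a self-looped vertex with new edge $(u,w)$: since $v$ loses both its $V^0$-neighbours but keeps its edges to all of $V^1$, it lands in $V^2$ with $N(v)=V^1$ when $V^1\ne\emptyset$, and becomes an isolated self-loop of degree $2$ in $V^\infty$ when $V^1=\emptyset$ (for a triangle this move is a multiedge). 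Finally, swapping a self-loop $(p,p)$ with $p\in V^2$ against a cycle edge $(u,v)$ of $V^0$ is valid because $p$ is adjacent to neither $u$ nor $v$, and it splices $p$ into the cycle, moving $p$ from $V^2$ into $V^0$ with the two new $V^0$-neighbours $u,v$ while keeping $p$ adjacent to all of $V^1$; the analogous splice of a $V^\infty$ self-loop into a $V^0$-cycle occurs only when $V^1=\emptyset$ and is handled the same way. In each surviving case I would verify conditions 1--6 directly.

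I expect the obstacle to be twofold. The edge inventory of $Q_2$ is richer than that of $Q_1$, so the "this swap creates a multiedge'' part of the case analysis is longer and must be organised carefully. More substantively, the few moves that actually alter the partition $(V^0,V^1,V^2,V^\infty)$---shortening a cycle into a self-loop, and splicing a $V^2$ or $V^\infty$ self-loop into a cycle---require checking that no vertex is pushed to distance $3$ from $V^0$ (so $V^3$ stays empty) and that the dichotomy in condition 6 ($V^\infty$ empty, or $V^1$ empty with all $V^\infty$-degrees equal to $2$) is preserved. The structural reason everything goes through is exactly that $V^1$, when nonempty, is a clique adjacent to all of $V^0\cup V^2$: that is what forces a freshly created self-loop to sit in $V^2$ rather than deeper, and what makes nearly every other swap illegal.
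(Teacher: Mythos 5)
Your overall strategy---an exhaustive catalogue of edge pairs, showing each swap is either blocked by a multiedge or lands back in $Q_2$---is the same as the paper's, and most of your case analysis matches it (cycle--cycle swaps rearranging or merging cycles, adjacent cycle edges sending a vertex into $V^2$ or $V^\infty$, and a $V^2$ self-loop splicing into a $V^0$-cycle). However, your edge inventory over-reads condition 2 of the definition of $Q_2$. That condition only forces a vertex $u\in V^0$ to be adjacent to all of $V^1$ when $u$ has exactly two neighbors in $V^0$; a vertex of $V^0$ with \emph{no} neighbors in $V^0$ is permitted, and such a vertex need not see all of $V^1$. So the induced subgraph on $V^0$ is a union of cycles \emph{plus isolated vertices}, and the bipartite part between $V^0$ and $V^1$ is not complete. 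Your key dismissal---``any swap that reroutes an edge incident to $V^1$ merely recreates an existing edge and is forbidden''---therefore fails in one family of cases: for edges $(u,v)$ and $(x,y)$ with $u,x\in V^0$ having no $V^0$-neighbors and $v,y\in V^1$, the swap $(u,v),(x,y)\leadsto (u,y),(x,v)$ can be valid. The paper treats exactly this case explicitly, noting it is possible only when $|N(u)\cap V^0|=|N(x)\cap V^0|=0$ and that it preserves every property of $Q_2$ (the other orientation is still blocked because $V^1$ is a clique).

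The lemma is unharmed---the missed swap is benign---but as written your proof has a hole: a valid transition out of the cases you enumerate. The fix is small: correct the inventory to allow cycle-isolated vertices of $V^0$ with partial attachment to $V^1$, and add the one extra case above. Separately, the paper shortcuts the $V^\infty\neq\emptyset$ branch by observing that conditions 1 and 6 force the nonisolated degree sequence to be $\{2,2,\dots,2\}$, already analyzed in Section 3; your integrated treatment of that branch is equivalent and fine.
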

  \begin{proof}
 If $V^\infty$ is non-empty then condition $6$ of $Q_2$ implies that $V^1 = \emptyset$ and that $k_u = 2$ for all $u\in V^\infty$.  Since $V^1 = \emptyset$ then condition $2$ implies that all non-isolated vertices in $V^0$ have degree $2$.  Thus, the degree sequence of non-isolated nodes is $\{2,2,...,2\}$, and this scenario was fully described earlier. 
 
If $V^1\ne \emptyset$, 
a quick check reveals that the only edge swaps that are possible (all others would require multiedges) involve swaps between two edges in $V^0$, swaps between an edge in $V^0$ and a self-loop in $V^2$, and swaps between two edges joining $V^0$ to $V^1$. However, each of these three swaps preserves the properties of $Q_2$: swaps between the two edges in $V^0$ rearrange the cycle structure of $V^0$ and potentially move a node from $V^0$ to $V^2$, but this preserves the properties of $Q_2$; swaps between a self-loop in $V^2$ and an edge in $V^0$ move a node from $V^2$ to $V^0$, reversing the previous swap; For edges $(u,v)$ and $(x,y)$, $u,x\in V^0$ and $v,y\in V^1$ swap $(u,v)(y,x)\leadsto (u,x),(y,v)$ is precluded by the presence of edge $(v,y)\in V^1$, while swap $(u,v)(x,y)\leadsto (u,y),(x,v)$ is only possible if both $|N(u)\cap V^0|= |N(x)\cap V^0| = 0$ and such a swap does not affect the properties of $Q_2$.  
 \end{proof}
 
 This implies the first half of Theorem \ref{Q1Q2Exact}:
 
 \begin{corollary}
 Any $\mathcal{G}$ which contains a graph in $Q_1$ or $Q_2$ is disconnected. \label{GQ1Q2}
 \end{corollary}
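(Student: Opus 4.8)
The plan is to deduce the corollary immediately from Lemmas \ref{Q1closed} and \ref{Q2closed} together with the existence of an isolated configuration in each class that admits no valid double edge swap. First I would observe that a connected component of $\mathcal{G}$ containing a graph $G$ consists exactly of those loopy graphs reachable from $G$ by a finite sequence of double edge swaps; hence if every graph reachable from $G$ lies in $Q_1\cup Q_2$, and there exists some valid loopy graph with the same degree sequence that is \emph{not} in $Q_1\cup Q_2$, then $\mathcal{G}$ is disconnected. By Lemma \ref{Q1closed}, starting from $G\in Q_1$ every double edge swap stays in $Q_1$, and inductively the whole component of $G$ lies in $Q_1$; the same argument with Lemma \ref{Q2closed} handles $G\in Q_2$.

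The remaining point is to exhibit, for the relevant degree sequences, a realizing loopy graph outside $Q_1\cup Q_2$. Here I would lean on the structural rigidity already noted: any $G\in Q_1$ has $V^0$ equal to a clique $K^0$ of size $\ge 4$ plus isolated (within $V^0$) vertices, with $V^1\cup K^0$ a clique and $V^2=V^\infty=\emptyset$, so such a $G$ has no valid double edge swap at all (the proof of Lemma \ref{Q1closed} in effect shows the only candidate swaps are $V^0$-to-$V^1$ pairs, and one checks those are all blocked or degenerate once one fixes the minimal witness); similarly a $G\in Q_2$ with $V^0$ a single cycle and $V^1=V^\infty=\emptyset$ is swap-rigid. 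Meanwhile the same degree sequence is realized by, e.g., redistributing a triangle inside $K^0$ or a $3$-cycle in $V^0$ into three self-loops (exactly the move used in the $\{2,2,2\}$ and clique discussions above), which produces a graph with strictly more self-loops and hence not of the same $V^0$-structure, so not in $Q_1\cup Q_2$. Therefore two distinct components exist and $\mathcal{G}$ is disconnected.

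I would present this crisply: let $G\in Q_1\cup Q_2$ be a vertex of $\mathcal{G}$; by the closure lemmas its entire connected component in $\mathcal{G}$ is contained in $Q_1\cup Q_2$; but the same degree sequence admits a loopy graph $G'$ with more self-loops (built by converting a triangle to three self-loops as in Section \ref{counterExamples}), and such a $G'$ violates the defining conditions of both $Q_1$ and $Q_2$ (which force a specific $V^0$), so $G'$ lies in a different component; hence $\mathcal{G}$ is disconnected.

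The main obstacle I anticipate is not the logical skeleton — which is a one-line consequence of closure — but making fully precise the claim that $\mathcal{G}$ genuinely contains \emph{another} graph not in $Q_1\cup Q_2$; one must be slightly careful that the triangle-to-self-loops replacement is always available (it is, since $|K^0|\ge 4$ in $Q_1$ and $|V^0|\ge 3$ with a cycle in $Q_2$ guarantee a triangle or $3$-cycle's worth of degree to redistribute) and that the resulting graph is a legal loopy graph with the prescribed degree sequence. Since the corollary is stated only as "disconnected," even the weaker observation that $Q_1$- and $Q_2$-graphs are themselves swap-rigid (isolated vertices in $\mathcal{G}$) while other realizations exist suffices, so I would keep the argument short and defer the sharper exact-characterization to Theorem \ref{Q1Q2Exact}.
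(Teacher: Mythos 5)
Your main argument is correct and is essentially the paper's proof: use Lemmas \ref{Q1closed} and \ref{Q2closed} to conclude that the entire component of $G$ lies in $Q_1\cup Q_2$, then exhibit a second realization of the same degree sequence, obtained by deleting the cycle edges in $V^0$ and placing a self-loop at each affected vertex, which violates the first criterion of both classes and therefore sits in a different component.

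One part of your write-up is wrong and should be deleted, namely the fallback claim that graphs in $Q_1$ and $Q_2$ are ``swap-rigid'' (isolated vertices of $\mathcal{G}$). The proofs of the closure lemmas explicitly exhibit valid swaps: for $Q_1$, the swap $(u,v),(x,y)\leadsto (u,y),(x,v)$ with $u,x\in V^0\setminus K^0$ and $v,y\in V^1$ can be legal (it merely stays inside $Q_1$), and for $Q_2$ a single cycle of length $k\ge 4$ in $V^0$ admits the adjacent-edge swap producing a self-loop and a $(k-1)$-cycle. So the ``weaker observation'' you propose as sufficient is false in general; only the closure-plus-alternative-realization route goes through. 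Since that route is exactly what you lead with (and what the paper does), the fix is simply to drop the swap-rigidity aside.
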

 \begin{proof}
 All graphs in $Q_1$ and $Q_2$ contain a closed cycle of length at least 3  in $V^0$. For a graph $G\in \mathcal{G}$ and $G\in Q_1 \cup Q_2$ let $C$ be all the cycles in $V^0$.  Deleting each edge in $C$ and placing a self-loop at each node in $C$ preserves the degree  sequence and thus creates a graph $H\in \mathcal{G}$, but $H$ does not satisfy the first criterion of either $Q_1$ or $Q_2$.  By  lemmas \ref{Q1closed} and \ref{Q2closed}, $Q_1$ and $Q_2$ are closed under double edge swaps and thus $G$ is not connected to $H$.
 \end{proof}
 
Thus, we have generalized the { the disconnectivity of $\{2,2,2\}$ in two directions, first to cliques, and then to $Q_1$, and second to cycles and then to $Q_2$.  
} 
As shown in the next section, $Q_1$ and $Q_2$ exactly characterize all disconnected $\mathcal{G}$.


\section{Categorizing the components of $\mathcal{G}$} \label{ghat}

{First consider the following definitions.} For any graph $G_i\in \mathcal{G}$,  let {$\mathcal{V}_*(G_i)$} be the graphs {in the same connected component of $\mathcal{G}$ as } $G_i$ with the maximum number of self-loops.
\begin{definition}[ $\hat{G}_i$] 
\label{def:Ghat}
 For an initial graph $G_i$, of the graphs in $ \mathcal{V}_*(G_i)$, let $\hat{G}_i\in \mathcal{V}_*(G_i)$ be a graph with the maximum number of edges inside $V^0$.
\end{definition}
\noindent   {To emphasize, $\hat{G}_i$ has the maximum number of self-loops of any graph path-connected to $G_i$, and secondly, has as many edges inside $V^0$ as any other path-connected graph with the same number of self-loops. This definition implies that there are no sequence of edge swaps which can net increase the number of self-loops in a graph $\hat{G}_i$.} While $\hat{G}_i$ has at least as many self-loops as any other graph connected to $G_i$, if $\mathcal{G}$ is not connected, $\hat{G}_i$ may not have the maximum number of self-loops possible.

Before we formalize the meaning of a graph with the maximum number of self-loops,  let $\{ \bar{k}_u \}$ denote the `simplified degree sequence' of a graph $G$:
\begin{definition}[simplified degree sequence $ \{ \bar{k}_u \}$]
\label{def:simplifiedegreesequence}
For a graph $G$, with degree sequence $\{k_u\}$, let $ \{ \bar{k}_u \}$ be the simplified degree sequence, where $\bar{k}_u = k_u$ if $(u,u)\not \in E$ and $\bar{k}_u = k_u-2$ if $(u,u)\in E$.
\end{definition}
\noindent The simplified degree sequence of a graph is the new degree sequence that results from deleting all self-loops in that graph. For the following, assume the degree sequence $\{k_u\}$ is in decreasing order.

\begin{definition}[$m$-loopy graph and $m^*$-loopy graph]
A graph $G$ is $m$-loopy if $G$ has $m$ self-loops on vertices $i\le m$. We denote an $m$-loopy graph as $m^*$-loopy if it has no fewer self-loops than any other $m$-loopy graph in $\mathcal{G}$. 
\end{definition}
\noindent 
We will show in corollary \ref{lem:m*exists} that every $\mathcal{G}$ contains an $m^*$-loopy graph, and that each such graph contains the maximum number of self-loops possible (not just the maximum of $m$-loopy graphs). 
Notice that the question of whether a $m$-loopy graph is $m^*$-loopy is equivalent to whether there is a $m_1>m$ such that $\bar{k}_u = k_u-2$ for $u\le m_1 $ and $\bar{k}_u = k_u$ for $u>m_1$ is a simple-graphical degree sequence (i.e. if there exists a simple graph with that degree sequences).  

Determining the cases where $\hat{G}$ is  $m^*$-loopy will be critical in the categorization of different possible $\hat{G}$ for the following reason.  

\begin{lemma}
For $G_1, G_2 \in \mathcal{V}$, if both $G_1$ and $G_2$ are $m^*$-loopy then $G_1$ is connected to $G_2$. \label{m*loopy}
\end{lemma}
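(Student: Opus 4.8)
**Plan for proving Lemma (m\*-loopy graphs are connected).**

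The plan is to show that any two $m^*$-loopy graphs $G_1, G_2 \in \mathcal{V}$ can be connected via double edge swaps by reducing to the known connectivity of a smaller graph space. The key observation is that both $G_1$ and $G_2$ have self-loops on exactly the same vertex set $S = \{1, \dots, m^*\}$ (the $m^*$ highest-degree vertices), and no other self-loops. Deleting those $m^*$ self-loops from each yields simple graphs $G_1'$ and $G_2'$, each with the starred degree sequence $k_i^* = k_i - 2$ for $i \le m^*$ and $k_i^* = k_i$ for $i > m^*$. First I would invoke the classical result that the space of simple graphs with a fixed degree sequence is connected under double edge swaps \cite{eggleton1981simple, taylor1981constrained}: there is a sequence of double edge swaps taking $G_1'$ to $G_2'$.

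Next, the main work is to \emph{lift} this sequence of swaps on simple graphs back to a sequence of swaps on the $m^*$-loopy graphs. Each swap in the simple-graph sequence acts on edges not incident to any self-loop, so performing the same swap on $G_1$ (with the self-loops on $S$ still attached) is a valid double edge swap in $\mathcal{G}$, provided it does not create a multiedge — and it cannot, since the intermediate simple graphs are by hypothesis simple, and the self-loops on $S$ are untouched and remain single self-loops. Thus the lifted sequence passes through $m^*$-loopy graphs at every step. The only subtlety is whether every intermediate graph in the lifted sequence lies in $\mathcal{V}$, i.e., is connected to $G_1$ within $\mathcal{G}$ and has the maximum number $m^*$ of self-loops accessible; but this is automatic because each lifted graph has exactly $m^*$ self-loops (the same ones), and $G_1 \in \mathcal{V}$ means $m^*$ is the max reachable from $G_1$, so every graph reachable from $G_1$ has at most $m^*$ self-loops — hence exactly $m^*$ for these — placing each intermediate graph in $\mathcal{V}$.

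The main obstacle I anticipate is purely bookkeeping: confirming that "$m^*$-loopy" forces the self-loops to sit on the \emph{same} vertex set for both graphs (this follows from the definition, since $m^*$-loopy means self-loops on vertices $i \le m^*$), and carefully checking the no-multiedge condition survives the lift. There is also a minor point that a double edge swap on a simple graph could in principle be "adjacent" and create a self-loop — but since $G_1', \dots, G_2'$ is a path through \emph{simple} graphs, no such swap occurs in the chosen sequence, so this case does not arise. Hence $G_1$ and $G_2$ are connected in $\mathcal{G}$, and since all intermediate graphs lie in $\mathcal{V}$, they are connected within $\mathcal{V}$ as well.
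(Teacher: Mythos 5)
Your proposal is correct and follows essentially the same route as the paper: both arguments observe that two $m^*$-loopy graphs carry self-loops on the same vertices, hence have the same simplified degree sequence, and then invoke the connectivity of the space of simple graphs under double edge swaps. The paper states this in one sentence and leaves the lifting of the swap sequence implicit; your write-up just makes that lifting step (and the no-multiedge check) explicit.
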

\begin{proof}
Since both $G_1$ and $G_2$ are $m^*$-loopy they have self-loops at the same vertices and the same simplified degree sequences and thus, by the connectivity of simple graphs \cite{taylor1981constrained}, $G_1$ and $G_2$ are connected.
\end{proof}

In some degree sequences, a  graph is obviously $m^*$-loopy because all nodes with degree at least two have self-loops.  It is not always as straightforward though.  
For example, the degree sequences $\{4,4,2 \}$ and $\{6,6,5,3,3,3,2  \}$ have no configurations where all vertices have self-loops, as $\{2,2,0\}$ and $\{4,4,3,1,1,1,0 \}$ are not simple-graphical degree sequences.  Instead, these graphs have valid configurations where all but the vertex with degree $2$ has self-loops.  

For any degree sequence there are thus two possibilities, either all graphs in $\mathcal{G}$ are connected to $m^*$-loopy graphs and $\mathcal{G}$ is connected, or there exists some graph not connected to any $m^*$-loopy graph and $\mathcal{G}$ is not connected.    

Understanding the possible forms of $m^*$-loopy graphs will comprise the majority of the remaining effort, but the simplest case may also be the most common case. 

{
\begin{lemma}
\label{lem:01mstar}
 For any $\hat{G_i}$ where $V^0$ contains only vertices of degree $0$ and $1$, $\hat{G_i}$ is $m^*$-loopy.
\end{lemma}
\begin{proof}
If $V^0$ has only vertices of degree $0$ and $1$ then all other vertices have self-loops.  Since vertices of degree $0$ and $1$ can not have self-loops $\hat{G}$ is $m^*$-loopy.
\end{proof}
}

We now turn our attention to the much more complicated scenarios where there exists some $u\in V^0$ with $k_u\ge 2$.  

{In order to further classify the different possible structures of different $\hat{G}_i$ consider the following definitions:
\begin{definition}[$K^0$]
\label{def:Kzero}
In a graph $\hat{G}_i$, Let $K^0$ refer to any of the largest sized cliques inside $V^0$.
\end{definition}
}
{As we will show, if $|K^0|>2$ then $V^0$ houses only a single clique, in which case $K^0$ is the unique clique.  We now classify $\hat{G}_i$ based upon the cliques inside $V^0$:  
}
{
\begin{definition}[$\hat{G}^d$]
\label{def:Ghatd}
$\hat{G}^d$ is a graph $\hat{G}_i$ with $|K^0 |=d$ where some $u\in V^0$ has $k_u\ge2$. 
\end{definition}
Following the proof of lemma \ref{exchange} we will assume WLOG that in a graph $\hat{G}^d$, $k_u\le k_x$ for any $u\in V^0$ and $x\not \in V^0$.
}

The critical lemmas to prove will be Lemmas \ref{exchange}, and \ref{masterLoopy}.  Lemma \ref{exchange} states that there exists a sequence of double edge swaps which can exchange any vertex in $V_0$ with any other vertex of equal or lower degree.  Thus any $\hat{G}_i$ is connected to another graph $\hat{G}_j$ where $V^0_j$ contains only the smallest degrees.  Building on this, Lemma \ref{masterLoopy} states that a graph $\hat{G}^d$, $d\le 2$, is $m^*$-loopy.
Thus, by lemma \ref{m*loopy} only degree sequences that can wire  a $\hat{G}^d$, $d\ge 3$ can be disconnected and, as will be shown in theorem \ref{Q1Q2Exact}, {any graph $\hat{G}^d$, $d\ge3$ implies disconnectivity because} a graph $\hat{G}^3\in Q_2$ and $\hat{G}^d\in Q_1$ for $d>3$ and $Q_2$ and $Q_1$ are closed.

\begin{figure}[htbp]
  \centering
	\includegraphics[width=.75\linewidth]{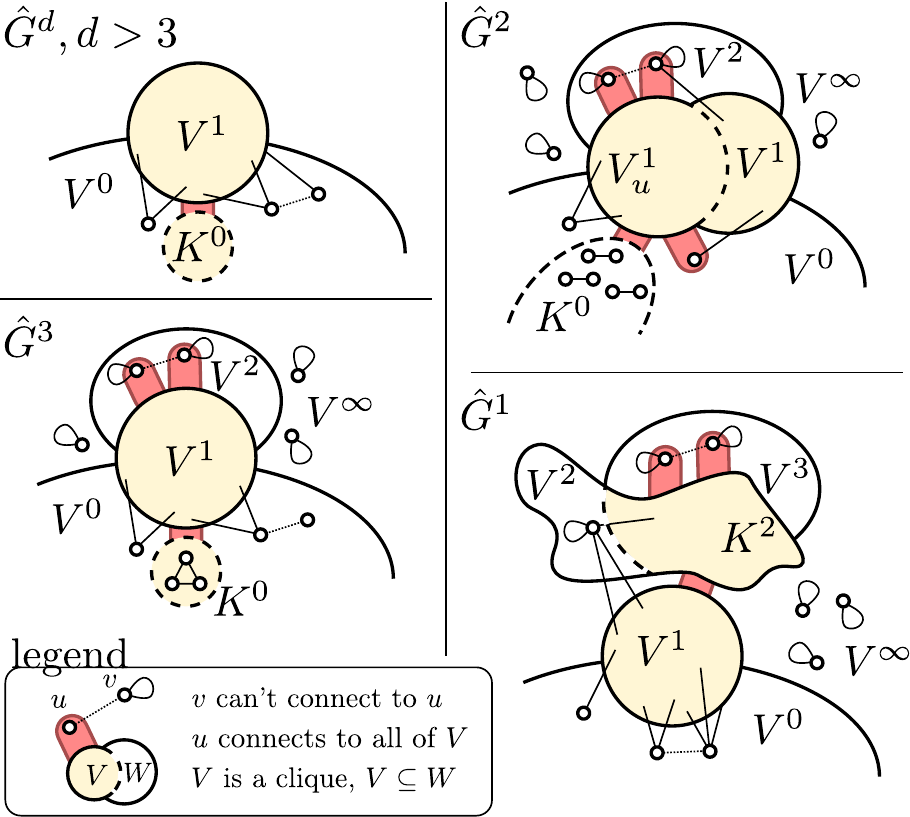} \label{GdForm} 
	\caption{The size of $K^0$ imposes strict requirements on the possible structure of $\hat{G}^d$.  When $K^0$ is a clique larger than $3$ vertices ($d>3$), $\hat{G}^d$ is simply a clique of vertices some with self-loops, some without, and a number of vertices with connections only into members of the clique which have self-loops.  Note: in $\hat{G}^3$ either $V^\infty$ or $V^1$ must be empty. }
\end{figure}

Before proving lemmas \ref{exchange} or \ref{masterLoopy} we first construct some general purpose lemmas. We begin with some investigations into restrictions on the sets $V^k$ for all $\hat{G}^d$.  Consider the following definition: 
\begin{definition}[Open Wedge $xuv$]
{
If $(u,v)\in E$, $(u,x)\in E$ and $(v,x)\not \in E$ then there exists open wedge $xuv$.
}
\end{definition}
{
\begin{lemma}
\label{noOpenWedge}
A graph $\hat{G}_i$ does not have an open wedge $xuv$ where $u\in V^0$.
\end{lemma}
\begin{proof}
Suppose not, then the swap $(u,x),(u,v)\leadsto (u,u),(x,v)$ is possible and creates a self-loop, violating the assumption that $\hat{G}_i$ had the maximum number of self-loops in its connected component of $\mathcal{G}$.
\end{proof}
}

\begin{lemma}
{For a graph $\hat{G}_i$,} any $u\in V^0$, and $v\in N(u)$, if $u$ connects to a vertex $x$, so does $v$.\label{K0_edges}
\end{lemma}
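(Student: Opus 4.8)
The plan is a one-step argument by contradiction, simply reading off the open-wedge observation made immediately before the lemma. Recall that $\hat G=\hat G_i$ was chosen inside $\mathcal V(G_i)$, the set of graphs reachable from $G_i$ carrying the maximum possible number of self-loops; in particular no single double edge swap applied to $\hat G$ can produce an extra self-loop, since it would land in the same component of $\mathcal G$. So it suffices to exhibit such a loop-creating swap whenever the conclusion fails.

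Concretely, I would suppose $u\in V^0$, $v\in N(u)$, and $x\in N(u)$ but $x\notin N(v)$; note $x\ne u$ automatically, since $u$ carries no self-loop, and the case $x=v$ is vacuous. Then $(u,v)$ and $(u,x)$ are two distinct edges present in $\hat G$ and $x\notin N(v)$, i.e.\ $xuv$ is an open wedge centered at $u\in V^0$. Consider the swap $(u,v),(u,x)\leadsto (u,u),(v,x)$. This is a legal single double edge swap: the two removed edges are distinct; the new edge $(u,u)$ is absent because $u\in V^0$; and the new edge $(v,x)$ is absent because $x\notin N(v)$. Since $v,x\ne u$, neither removed edge is a loop while $(u,u)$ is, so this swap strictly increases the number of self-loops, contradicting the maximality defining $\mathcal V(G_i)$. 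Hence $x\in N(v)$.

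I do not expect any genuine obstacle: the statement is essentially a repackaging of the remark that $\hat G$ is free of open wedges at vertices of $V^0$. The only points needing care are the trivial boundary cases ($x=u$ is impossible, $x=v$ is vacuous) and the verification that the exhibited move is a bona fide double edge swap — that it deletes two distinct edges and creates no multiedge — which is exactly where the hypotheses $u\in V^0$ and $x\notin N(v)$ are used.
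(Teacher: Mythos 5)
Your proof is correct and matches the paper's argument exactly: the paper's proof is the one-liner ``if not, then there exists open wedge $xuv$,'' relying on the preceding observation that $\hat G$ must be free of open wedges centered in $V^0$, and you have simply spelled out the swap $(u,v),(u,x)\leadsto(u,u),(v,x)$ and the boundary cases that make that observation valid.
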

\begin{proof}
If not, then there exists open wedge $xuv$, {contradicting lemma \ref{noOpenWedge}}.
\end{proof}

Lemma \ref{K0_edges} implies that any subgraph of $V^0$ are cliques plus isolated vertices, as is the subgraph on vertices $u\cup N(u)$ for $u\in V^0$.

\begin{lemma}
{For a graph $\hat{G}_i$, if} there exists disjoint $(u,v)$ and $(x,y)$ both in $V^0$, $k_u \ge 2$ then $N(u)\cap V^1=N(x)\cap V^1$
\end{lemma}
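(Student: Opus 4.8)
The plan is to argue by contradiction: if some $w\in V^1$ lies in exactly one of $N(u)$ and $N(x)$, I would exhibit a short sequence of double edge swaps starting from $\hat G$ and ending at a graph with strictly more self-loops, which is impossible since no graph connected to $\hat G$ has more self-loops than $\hat G$. Before getting to that, I would record a dichotomy for the two disjoint $V^0$-edges using Lemma~\ref{K0_edges}: if there is \emph{any} edge between $\{u,v\}$ and $\{x,y\}$ then $u,v,x,y$ are pairwise adjacent, and otherwise there is no edge between the two pairs. Both halves are one-line: a single application of Lemma~\ref{K0_edges} along the edge $(u,v)$ (or $(x,y)$) propagates any one cross-edge to all four vertices, and likewise propagates the absence of a cross-edge. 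In the pairwise-adjacent case the conclusion is immediate: applying Lemma~\ref{K0_edges} to the edge $(u,x)$ (legitimate since $u,x\in V^0$) in both directions gives $N(u)\setminus\{x\}=N(x)\setminus\{u\}$, and intersecting with $V^1$ — which contains neither $u$ nor $x$ — finishes that case.

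So the substantive case is when $u,v$ and $x,y$ lie in two different cliques of $V^0$ with no edges between them. Suppose $w\in V^1$ (so $w$ carries a self-loop and is therefore distinct from $u,v,x,y$) with $w\sim u$ but $w\not\sim x$. I would perform the swap $(u,w),(x,y)\leadsto(u,y),(w,x)$: this is legal because $(u,y)\notin E$ (the endpoints are in different $V^0$-cliques) and $(w,x)\notin E$ (by assumption), and it touches no self-loop, so the resulting graph $\hat G'$ has the same number of self-loops as $\hat G$. In $\hat G'$ the vertex $u$ is still self-loop-free and is now adjacent to both $v$ and $y$, while $v\not\sim y$ (this non-edge was not created by the swap); hence $vuy$ is an open wedge at a self-loop-free vertex, and the further swap $(u,v),(u,y)\leadsto(u,u),(v,y)$ creates a new self-loop. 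That produces a graph connected to $\hat G$ with more self-loops than $\hat G$, the desired contradiction. The complementary case $w\sim x$, $w\not\sim u$ runs identically using the swap $(x,w),(u,v)\leadsto(x,v),(w,u)$ followed by the open wedge $vxy$ at $x$; note this case only uses the existence of the edge $(u,v)$, which is part of the hypothesis.

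The step that needs the most care will be checking that every swap invoked is genuinely legal — never a no-op and never producing a multiedge — and this is exactly what the dichotomy is designed to supply: once we know the two edges sit in cliques with no cross-edges (and that $w$ carries a self-loop, hence is distinct from $u,v,x,y$), each candidate new edge, namely $(u,y)$, $(w,x)$, $(v,y)$ and the loop $(u,u)$, is easily seen to be absent. The only other bookkeeping is verifying that $u$ (resp.\ $x$) stays self-loop-free in the intermediate graph $\hat G'$, which holds because the first swap only rewires one of its non-loop edges; past that point the argument is routine.
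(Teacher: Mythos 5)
Your proof is correct and follows essentially the same strategy as the paper's: use Lemma~\ref{K0_edges} to control cross-adjacency between the two $V^0$-edges, then derive a contradiction by performing one double edge swap that manufactures an open wedge at a loop-free vertex, violating the maximality of self-loops in $\hat{G}$. The only cosmetic differences are that you split off the all-adjacent case (where the paper instead uses $w$ to rule out cross-edges outright) and that your swap sacrifices the edge $(u,w)$ to create the wedge $vuy$, whereas the paper swaps $(u,v)$ with $(x,y)$ to create the wedge $wux$.
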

\begin{proof}
Suppose first that $N(u)\cap V^1 \not \subseteq N(x)\cap V^1$, then there exists $w\in N(u)\cap V^1$ such that $w\not \in N(x)$. If $(u,x)$ exists, then by lemma \ref{K0_edges} $x$ must be connected to $w$, a contradiction.  Thus $(u,x)$ isn't present, and similarly, lemma \ref{K0_edges} also implies that $(u,y)$, $(v,x)$ and $(v,y)$ aren't present.  
Swap $(u,v),(x,y)\leadsto(u,x),(v,y)$ is thus possible but creates open wedge $wux$ contradicting that $\hat{G}$ has the maximal number of self-loops.

If instead $N(x)\cap V^1 \not \subseteq N(u)\cap V^1$ then $k_x\ge 2$ and the above argument holds.
\end{proof}

\begin{lemma}
{For a graph $\hat{G}_i$, if} there exists $u\in V^0$, $k_u\ge2$ then {any edge which is not a self-loop contains} a vertex in $V^0$, $V^1$ or $V^2$. \label{noV3V3}
\end{lemma}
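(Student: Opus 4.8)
The goal is to show that if $V^0$ contains a vertex of degree at least $2$, then every non-loop edge of $\hat G$ touches $V^0\cup V^1\cup V^2$, i.e.\ there are no edges with both endpoints in $V^{\ge 3}:=V^3\cup V^4\cup\cdots\cup V^\infty$, and indeed no edge from $V^3$ to anything other than $V^2$. The whole argument runs through the defining property of $\hat G$: it has the maximum possible number of self-loops among graphs connected to it, so no sequence of double edge swaps can produce a net new self-loop; in particular there are no open wedges centered at any vertex of $V^0$, even after a single swap. So the strategy is to assume for contradiction that there is a non-loop edge $(x,y)$ with $x,y\notin V^0\cup V^1\cup V^2$, pick a witnessing vertex $u\in V^0$ with $k_u\ge 2$ and a neighbor $v\in N(u)$, and manufacture a short swap sequence that frees up an open wedge at $u$ (or at some other $V^0$ vertex), contradicting maximality.

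**Key steps.** First I would record the easy structural facts: $u\in V^0$ with $k_u\ge 2$ has at least two neighbors, all lying in $V^1$ (a neighbor of $u$ in $V^0$ would, by Lemma~\ref{K0_edges} applied repeatedly, force $u$ into a clique, but the point is that $u$'s neighbors are one step from $V^0$), so $u$ witnesses that $V^1\ne\emptyset$; by Lemma~\ref{K0_edges} every $w\in N(u)$ connects to everything $u$ connects to, so $N(u)\subseteq V^1$ and the vertices of $N(u)$ together dominate a lot of the graph. Second, suppose $(x,y)$ is an edge with $x\in V^k$, $y\in V^\ell$, $k\ge 3$ and $\ell\ge 3$ (or $x\in V^\infty$); then $x,y\notin N(u)$ and $x,y$ are non-adjacent to $u$ and to each of $u$'s neighbors $w$ (distance reasons: a neighbor of $w\in V^1$ lies in $V^0\cup V^1\cup V^2$). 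Now consider the swap $(u,w),(x,y)\leadsto (u,x),(w,y)$ for a chosen $w\in N(u)$. This is a legal swap — none of $(u,x)$, $(w,y)$ is already present — and in the resulting graph $u$ still has another neighbor $w'\in N(u)$ (since $k_u\ge2$), and $w'$ is not adjacent to the new neighbor $x$ of $u$ (because in $\hat G$, $w'$ being at distance $1$ from $V^0$ had all its neighbors in $V^0\cup V^1\cup V^2$, and the swap only added the edge $(w,y)$, not anything incident to $w'$). Hence $x\,u\,w'$ is an open wedge at $u\in V^0$ in a graph connected to $\hat G$, so a further swap creates a self-loop — contradicting that $\hat G$ is maximal. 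The remaining case, an edge from $V^3$ to some $V^j$ with $j\le 2$ but where one endpoint genuinely sits in $V^3$ and must be excluded, is handled the same way: the claim only asserts each edge \emph{contains} a vertex in $V^0\cup V^1\cup V^2$, so it suffices to rule out both endpoints being outside that set, which is exactly the case just treated; if needed one also checks that an edge cannot join $V^3$ to $V^\infty$ by the same open-wedge creation.

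**Main obstacle.** The delicate part is verifying that the auxiliary swap $(u,w),(x,y)\leadsto(u,x),(w,y)$ is genuinely multiedge-free and that afterward a second $V^0$-centered open wedge really appears — i.e.\ controlling which edges are present among $\{u,w,w',x,y\}$ and their neighborhoods before and after the swap. This is where one must use the distance labels carefully: the key input is that in $\hat G$ every vertex of $V^1$ has all neighbors in $V^0\cup V^1\cup V^2$ (because $V^1$ vertices are one hop from $V^0$, so their neighbors are within two hops), so $x\in V^{\ge 3}$ is non-adjacent to all of $N(u)\subseteq V^1$; then the single new edge $(w,y)$ cannot accidentally close the wedge $x\,u\,w'$ since it is not incident to $w'$. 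If the direct choice of swap ever fails because of some already-present edge (for instance if $y$ happened to be adjacent to $w$), I would instead first perform a preliminary swap to relocate $y$, or choose a different neighbor of $x$ along the edge $(x,y)$; a clean way to package all these contingencies is to argue at the level of ``some sequence of swaps nets a self-loop,'' which is all maximality forbids. I expect this bookkeeping — not any conceptual difficulty — to be the bulk of the proof.
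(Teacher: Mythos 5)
Your proof is correct and follows essentially the same strategy as the paper's: assume a non-loop edge $(x,y)$ with both endpoints outside $V^0\cup V^1\cup V^2$, use the distance labels to verify that a single double edge swap pairing $(x,y)$ with an edge near $u$ is multiedge-free, and observe that the result has an open wedge centered at $u\in V^0$, contradicting the maximality of $\hat{G}$. The only difference is cosmetic: the paper swaps $(x,y)$ with the edge $(v,w)$ between two neighbors of $u$ (which must exist to avoid a wedge) to destroy $(v,w)$ and open the wedge $vuw$, whereas you swap $(x,y)$ with $(u,w)$ itself to create the wedge $xuw'$ -- both verifications rest on the same fact that vertices in $V^{\ge 3}$ cannot be adjacent to $u$ or to any neighbor of $u$.
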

\begin{proof}
Suppose to the contrary that there exists $(x,y)$ with neither $x$ nor $y$ in $V^0$, $V^1$ or $V^2$.  For $v,w \in N(u)$, notice $(v,w)$ must exist, otherwise there exists an open wedge $vuw$, swap $(v,w),(x,y)\leadsto(x,v),(y,w)$ is thus valid, but creates an open wedge at $u$.  
\end{proof}

This implies that $V^d$ is empty for all finite $d>3$, and $V^3$ contains no edges asside from self-loops. This also implies that the set of vertices disconnected from $V^0$ can only contain isolated self-loops.

\begin{lemma}
For $\hat{G}^d$ with $d\ge 3$, $V^1 \subset N(u)$ for all $u\in K^0$. \label{K0V1Conn}
\end{lemma}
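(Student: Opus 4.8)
The plan is to argue by contradiction, using the one global feature of $\hat{G}$ that is available at this point: no sequence of double edge swaps starting from $\hat{G}$ can ever produce a graph with strictly more self-loops. So I would fix $u\in K^0$ and $w\in V^1$, assume $(u,w)\notin E$, and build a short swap sequence out of $\hat{G}$ that nets a new self-loop.

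First I would record two preparatory facts. \emph{(a)} The vertex $w$ has no neighbor in $K^0$: if $w\sim z$ with $z\in K^0$, then $z\neq u$ (since $(u,w)\notin E$), so $u\in N(z)$ because $K^0$ is a clique, and applying Lemma~\ref{K0_edges} at $z\in V^0$ forces $u\sim w$, a contradiction. \emph{(b)} Since $w\in V^1$, it carries a self-loop and has some neighbor $z\in V^0$; by \emph{(a)}, $z\notin K^0$, and in fact $z$ has no neighbor in $K^0$ at all --- otherwise Lemma~\ref{K0_edges} would make $z$ adjacent to every vertex of $K^0$, so $K^0\cup\{z\}$ would be a clique of size $d+1$ in $V^0$, contradicting the maximality of $K^0$.

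Then I would run the following three swaps. Choose distinct $a,b\in K^0$ and, using $d=|K^0|\ge 3$, a third vertex $c\in K^0\setminus\{a,b\}$; note $w,z,a,b,c$ are automatically distinct. Perform $(w,w),(a,b)\leadsto (w,a),(w,b)$; then $(w,a),(w,z)\leadsto (w,w),(a,z)$; then $(a,z),(a,c)\leadsto (a,a),(z,c)$. Each step has to be checked to be a legal double edge swap: every newly created edge is non-adjacent by facts \emph{(a)}, \emph{(b)}, and the observation that $a$ acquires no self-loop until the final step, while the deleted edges are all present by construction. The combined effect is that $w$'s self-loop is destroyed in the first swap and restored in the second, and a genuinely new self-loop is created at $a$ in the third, with none destroyed. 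Hence the resulting graph is connected to $\hat{G}$ yet has more self-loops than $\hat{G}$ --- contradiction. Therefore $(u,w)\in E$, and since $u\in K^0$, $w\in V^1$ were arbitrary, $V^1\subseteq N(u)$ for all $u\in K^0$.

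The main obstacle is not conceptual but bookkeeping: I must track exactly which edges are present or absent after each of the three intermediate swaps to be sure no step creates a multiedge and that each step really does move a self-loop in the claimed direction. It is also worth flagging where the hypothesis $d\ge 3$ is used --- it is precisely what guarantees that after the first swap deletes $(a,b)$ there is still a clique-neighbor $c$ of $a$ available to absorb the final self-loop; for $d=2$ no such $c$ exists, which is consistent with the lemma being stated only for $d\ge 3$.
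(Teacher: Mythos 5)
Your proof is correct and takes essentially the same approach as the paper: use Lemma~\ref{K0_edges} together with the maximality of $K^0$ to show that the offending vertex of $V^1$ and its $V^0$-neighbor are both independent of $K^0$, then contradict the self-loop maximality of $\hat{G}$ with an explicit swap sequence. The paper's sequence is shorter --- the single swap $(u,v),(x,y)\leadsto(u,x),(v,y)$ with $u,v\in K^0$ already creates an open wedge at a clique vertex --- but your three-swap route through the self-loop of $w$ checks out as well.
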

\begin{proof}
Suppose not, then there exists $x\in V^1${, $u\in K^0$ such that $(u,x)\not \in E$.  By Lemma \ref{K0_edges} it must be that $x$ does not neighbor any vertices in} $K^0$.  Since $x\in V^1$ there exists $y\in V^0\setminus K^0$ along with edge $(x,y)$.  Let $u,v,w\in K^0$. Lemma \ref{K0_edges} implies that since $y \not \in K^0$ then $y$ {does not neighbor any vertices} of $K^0$.  Swapping $(u,v),(x,y)\leadsto(u,x),(v,y)$ creates an open wedge $yvw$.  
\end{proof}

\begin{lemma}
{In a graph $\hat{G}_i$, }if there exists $u\in V^0$, $k_u\ge2$, then $V^1$ is a clique. \label{V1clique}

\end{lemma}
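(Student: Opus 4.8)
The plan is to argue by contradiction, and the one idea I expect to really matter is that $\hat G$ carries a \emph{second} optimality beyond having the maximal number of self-loops: among all graphs reachable from $G_i$ with that maximal self-loop count, $\hat G$ also has the maximum number of edges lying inside $V^0$. A pure self-loop count argument will not suffice here, because the relevant swaps preserve the number of self-loops; the contradiction has to come from this secondary quantity. So I would assume there are distinct $v,w\in V^1$ with $v\notin N(w)$. Since $v,w\in V^1$ they are not in $V^0$, hence each carries a self-loop, and each lies at distance $1$ from $V^0$, so I can fix $a\in N(v)\cap V^0$ and $b\in N(w)\cap V^0$.

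First I would squeeze everything I can out of Lemma \ref{K0_edges}. Applying it with center $a$ and neighbor $v$: if $a$ were adjacent to $w$ then $v$ would be too, so $a\notin N(w)$; symmetrically $b\notin N(v)$; and likewise $a\ne b$, since $a=b$ would put $a\in N(w)$ and then force $v\in N(w)$. Applying Lemma \ref{K0_edges} one more time with center $a$ and neighbor $v$: if $(a,b)\in E$ then $(v,b)\in E$, contradicting $b\notin N(v)$, so $(a,b)\notin E$. At this point $a,b,v,w$ are four genuinely distinct vertices (here I use that $v,w$ have self-loops while $a,b\in V^0$ do not, so $v,w\notin\{a,b\}$), the edges $(a,v)$ and $(b,w)$ are vertex-disjoint, and neither $(a,b)$ nor $(v,w)$ is present.

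The finishing move is the double edge swap $(a,v),(b,w)\leadsto(a,b),(v,w)$, which is legal exactly because $(a,b)$ and $(v,w)$ are absent and $a\ne b$, $v\ne w$. This swap changes no vertex's self-loop status, so the resulting graph is connected to $\hat G$, hence to $G_i$, and still has the maximum number of self-loops; but each of the two deleted edges, $(a,v)$ and $(b,w)$, has an endpoint ($v$, respectively $w$) outside $V^0$, whereas the newly created edge $(a,b)$ lies entirely inside $V^0$. Thus the new graph has strictly more edges inside $V^0$ than $\hat G$, contradicting the choice of $\hat G$. Hence every two vertices of $V^1$ are adjacent, i.e., $V^1$ is a clique.

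The main obstacle, as anticipated, is recognizing that one must push on the "edges inside $V^0$" optimality rather than the self-loop count; after that the work is just careful bookkeeping — checking that $a,b,v,w$ are distinct and that the swap creates neither a multiedge nor a spurious self-loop, so that it is precisely the second-order invariant that is violated. (Note that the hypothesis that some $u\in V^0$ has $k_u\ge 2$ does not appear to be used in this argument; it is stated because the lemma is applied only in that regime.)
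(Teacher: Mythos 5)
Your proof is correct and follows essentially the same route as the paper's: the paper's argument also hinges on the swap $(u,x),(v,y)\leadsto(x,y),(u,v)$ creating an extra edge inside $V^0$ and thereby violating the secondary maximality of $\hat{G}$ (maximum edges contained in $V^0$), exactly the "second-order invariant" you identify. The only cosmetic difference is that the paper splits into two cases (a common $V^0$-neighbor of the two nonadjacent $V^1$ vertices versus distinct neighbors), whereas you dispose of the common-neighbor possibility up front by repeated use of Lemma \ref{K0_edges}.
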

\begin{proof}
If $|V^1|=1$, then $V^1$ is trivially a clique. If $|V^1|\ge2$, then suppose to the contrary that there exists $x,y\in V^1$ such that $(x,y)\not \in E$.   Consider the two possible cases:  
\begin{enumerate}
\item There exists some $u\in V^0$ such that $x,y \in N(u)$. In this case, if $(x,y)\not \in E$ then there is an open wedge $xuy$.  Thus $(x,y)\in E$.
\item There exists $u,v\in V^0$, such that $(u,x)$ and $(v,y)$ are in $E$ but $(u,y)$ and $(v,x)$ are not. If $(u,v)\in E$ then there exists an open wedge $xuv$.  Thus both $(x,y)$ and $(u,v)$ are not in $E$ and the swap $(u,x),(v,y)\leadsto(x,y),(u,v)$ is valid, but produces a graph with one additional edge in $V^0$, contradicting that $\hat{G}$ has the maximum number of edges in $V^0$.

\end{enumerate}
\end{proof}

\begin{lemma}
In $\hat{G}_i$, for any vertex $x\not \in V^0$, and any vertex $u\in V^0$, if {$k_x \le k_u$} then there exists a sequence of swaps that exchanges $x$ for $u$ in $V^0$ {without decreasing the number of edges in $V^0$}. \label{exchange}
\end{lemma}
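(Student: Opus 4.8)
The plan is to produce the exchanging sequence explicitly, reducing everything to one clean ``transfer'' gadget. Write the self-loop at $x$ as $(x,x)$ (it exists since $x\notin V^0$); we may assume $k_u\ge 2$, since otherwise $u$ cannot carry a self-loop and there is nothing to exchange. The gadget is: if $u\notin N(x)$ and $u$ has two distinct neighbours $v,w\notin N(x)$, then the pair of swaps $(x,x),(u,v)\leadsto(x,u),(x,v)$ and then $(u,w),(u,x)\leadsto(u,u),(w,x)$ is legal --- the first needs only $x\not\sim u$ and $x\not\sim v$, and after it $x$ and $w$ are still non-adjacent, so the second needs only $u\not\sim u$ and $x\not\sim w$ --- and a routine count of edges and self-loops shows the resulting graph lies in $\mathcal V$ with a self-loop at $u$ and none at $x$, i.e.\ $x$ has been exchanged for $u$. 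So it suffices to show that from $\hat G$ one can reach, by a preliminary sequence of swaps (which may temporarily lower the self-loop count but restores it), a member of $\mathcal V$ satisfying the gadget's hypotheses.

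To that end I would first extract the structural consequences of maximality. By Lemma~\ref{K0_edges}, $\{u\}\cup N(u)$ is a clique; hence $u$ has no open wedge, so no single swap creates a self-loop at $u$, and the clique must first be broken. Moreover, if $u\not\sim x$ and some common neighbour $v$ of $u$ and $x$ lay in $V^0$, then $uvx$ is an open wedge at the $V^0$-vertex $v$, so $(v,u),(v,x)\leadsto(v,v),(u,x)$ would add a self-loop at $v$ without disturbing the one at $x$ --- contradicting that $\hat G$ has the maximum number of self-loops. Thus whenever $u\not\sim x$, every common neighbour of $u$ and $x$ already carries a self-loop (and lies in $V^1$). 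This, together with $k_x\ge k_u$ --- which forces $x$ to have at least $k_u$ non-loop neighbours and hence ``spare capacity'' once $N(u)$ is accounted for --- is what powers the reduction.

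The reduction splits into the residual cases where the gadget fails: (i) $u\in N(x)$, and (ii) $u\notin N(x)$ with all but at most one neighbour of $u$ in $N(x)$. In case (i), $x\in\{u\}\cup N(u)$ forces (Lemma~\ref{K0_edges}) $x$ to be adjacent to all of $N(u)$, so $k_x\ge k_u+2$ and $x$ has a non-loop neighbour $a\notin\{u\}\cup N(u)$; a swap on the edge $(u,x)$ against a suitably oriented edge at $a$ (or, failing that, a swap consuming $(x,x)$ against an edge disjoint from $N(x)\cup\{x\}$) deletes the edge $ux$ and puts us into case (ii) or directly into the gadget's hypotheses. In case (ii), using that the overlapping neighbours of $u$ all carry self-loops and that $x$ has non-loop neighbours off $N(u)$, I would re-route edges incident to $x$ away from $N(u)$, one $V^0$-edge or clique rotation at a time, until two neighbours of $u$ lie outside $N(x)$; the clique structure of $\{u\}\cup N(u)$ keeps each preliminary swap multiedge-free, and a running tally of self-loops confirms we leave and re-enter $\mathcal V$.

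The main obstacle is case (ii) in its extreme form $N(u)\subseteq N(x)$, where every neighbour of $u$ is a self-loop vertex adjacent to $u$, to $x$ and to one another: here every ``obvious'' swap collides with an existing edge, and getting started genuinely requires exploiting both the degree surplus $k_x\ge k_u$ and the maximality of the self-loop count to locate a usable edge or rotation. Carrying out this extremal case --- while keeping the self-loop count and the ``$x\leftrightarrow u$'' bookkeeping under control through the preliminary rewiring --- is the delicate part of the proof.
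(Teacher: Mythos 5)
Your two-swap gadget is the same device the paper uses, but the proof as a whole has a genuine gap, and the gap sits exactly where it must: the ``extremal'' case you defer ($u$ and all of $N(u)$ contained in $N(x)$, everything mutually adjacent) is not merely delicate --- under the inequality as you read it, $k_x\ge k_u$, it is a case where the claimed exchange simply does not exist. Take $K_5$ with self-loops added at two vertices (degree sequence $\{6,6,4,4,4\}$): every pair of edges either shares a vertex inside a triangle or spans four mutually adjacent vertices, so this graph admits no valid double edge swap at all; yet a looped vertex $x$ with $k_x=6$ and an unlooped $u$ with $k_u=4$ satisfy $k_x\ge k_u$, and no sequence of swaps can move the self-loop. (This is precisely the $Q_2$/$\hat G^3$ obstruction the paper is building toward.) Consequently the ``preliminary rewiring'' you invoke to escape cases (i) and (ii) cannot exist in general, and no amount of bookkeeping will rescue it.

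The paper's own proof runs the degree comparison the other way. Although the lemma is printed with $k_x\ge k_u$, the argument in the proof (and every subsequent use of the lemma --- ``exchange any vertex in $V^0$ with any other vertex of equal or lower degree,'' and the WLOG that $V^0$ carries the smallest degrees) operates under $k_x\le k_u$. With that hypothesis your hard case evaporates by counting: the self-loop at $x$ consumes two units of $k_x$, so $x$ has at most $k_u-2$ non-loop neighbours while $u$ has $k_u$ neighbours, which guarantees two vertices $v,w\in N(u)\setminus N(x)$; since $\{u\}\cup N(u)$ is a clique by Lemma \ref{K0_edges}, the edge $(v,w)$ exists and the pair of swaps $(x,x),(v,w)\leadsto(x,v),(x,w)$ followed by $(u,v),(u,w)\leadsto(u,u),(v,w)$ completes the exchange. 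The paper handles a neighbour of $u$ inside $V^0$ by one preprocessing swap through the self-loop at $x$, and disposes of $x\in V^1$ by noting such $x$ never has degree below that of a $V^0$ vertex, so no exchange from $V^1$ is required. If you reverse the inequality and rerun your argument, your gadget's hypotheses become attainable in every case and the residual cases you could not close disappear.
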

\begin{proof}
First, we consider the case where $x\in V^1$ {and show that $k_x\ge k_u$}.  For $d\in \{1,2\}$, since $V^1$ is a clique, each $x\in V^1$ contains a self-loop and any $u\in V^0$ contains at most a single neighbor not in $V^1$ then $k_x \ge k_u$. 
For $d\ge 3$,  lemmas \ref{K0V1Conn} and \ref{V1clique} imply that $K^0\cup V^1$ is a clique, and lemma \ref{K0_edges} implies that all subgraphs of $V^0$ are cliques, then $k_x \ge k_u+2$ for any $x\in V^1$ and $u\in V^0$.  

For $x\in V^m$, $m\ge 2$ suppose that there exists $x$ with degree less than $u$. Consider two cases, first that $N(u)\subseteq V^1$ and second that there exists edge $(u,z)\in V^0$.

\begin{enumerate}
\item $N(u)\subseteq V^1$:  Since $(x,x)$ contributes $2$ to $x$'s degree, $N(u)\subseteq V^1$ and $k_x\le k_u$ then there exists $v,w\in N(u)$ and $v,w \not \in N(x)$.  In such a case, notice that swap $(x,x),(v,w)\leadsto (x,v),(x,w)$ and subsequent swap $(u,v),(u,w)\leadsto(u,u),(v,w)$ exchanges $x$ for $u$ in $V^0$.

\item There exists edge $(u,z)\in V^0$:  First, swap $(x,x),(u,z)\leadsto(x,u),(x,z)$.  Since $k_x\le k_u$ and $x$ is connected to $z$ while $z\not \in N(u)$ then there must be some $y\in N(u)$ but $y\not \in N(x)$.  Thus there exists open wedge $xuy$ and swap $(x,u),(u,y)\leadsto (u,u),(x,y)$ exchanges  $x$ for $u$ in $V^0$. {Since it was originally the case that $z\in N(u)$, then $N(z)\setminus u = N(u)\setminus z$ by lemma \ref{K0_edges}. Now that $x\in N(z)$ then it must be that $N(x)\setminus z = N(z)\setminus x$ or else, as in lemma \ref{K0_edges} there would exist an open wedge, leading to a graph with additional self-loops.  That $N(x)\setminus z = N(z)\setminus x$ implies that the new graph has the same number of edges in $V^0$.   }

\end{enumerate} 
\end{proof}

{
\begin{lemma}
\label{lem:GhatMsimple}
Every $G_i$ is connected in $\mathcal{G}$ to an $m$-loopy $\hat{G}_i$. 
\end{lemma}
\begin{proof}
Suppose $\hat{G}_i$ is not $m$-loopy. Lemma \ref{exchange} implies that there exists a series of swaps that preserve the number of self-loops, but place all self-loops on the first $m$ largest degree vertices.  Since these swaps do not decrease the number of edges in $V^0$, the resulting graph has at least as many self-loops and as many edges in $V^0$ as $\hat{G}_i$ and is thus also a valid $\hat{G}_i$.
\end{proof}
}

{
\begin{corollary}
\label{lem:m*exists}
Every nonempty $\mathcal{V}$ contains an $m^*$-loopy graph, and every $m^*$-loopy graph contains the maximum number of self-loops.  
\end{corollary}
\begin{proof}
Let $G_j$ be such that $\hat{G}_j$ has the maximum number of self-loops of all graphs in $\mathcal{V}$.  By lemma \ref{lem:GhatMsimple}, one of these $\hat{G}_j$ is $m$-loopy, and since it has the maximum number of self-loops possible, it must also be $m^*$-loopy 
\end{proof}
}

Based on lemma \ref{lem:GhatMsimple} we will assume WLOG that $\hat{G}^d$ refers to a $m$-loopy graph. It thus remains to show that $\hat{G}^d$ is $m^*$-loopy for $d \in \{1,2\}$ (lemma \ref{masterLoopy}) and to further restrict the possible structures when $d\ge 3$.  


\subsection{The structure of $\hat{G}^1$}

\begin{figure}[htbp]
  \centering
		\includegraphics[width=.45\linewidth]{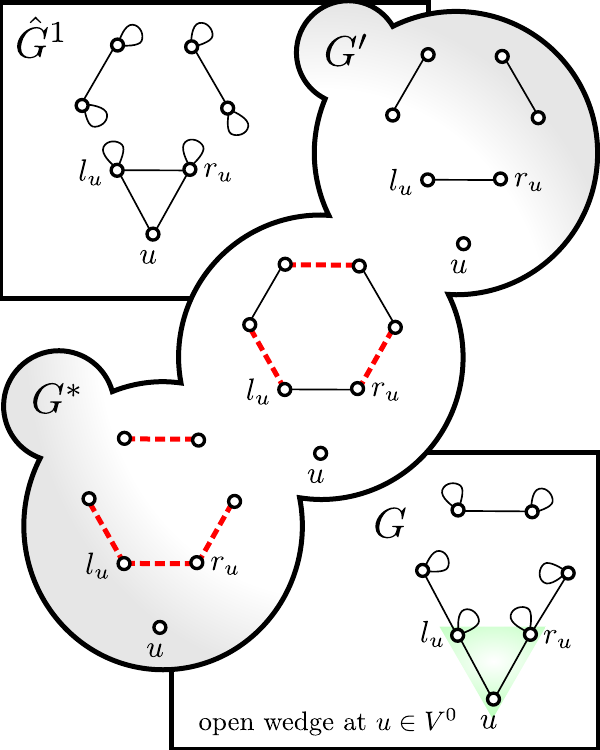}
	\caption{If there exists a graph with more self-loops than a graph $  \hat{G}^1$   then an alternating cycle argument can show that there exists a graph $G$, with the same simplified degree sequence as $\hat{G}$ but with an open wedge at $u\in V^0$.  \label{altpath}}
\end{figure}

\begin{lemma}
Every {m-loopy} $\hat{G}^1$, is $m^*$-loopy. \label{k1graph}
\end{lemma}
\begin{proof}
Suppose not, then by lemma \ref{lem:GhatMsimple} there exists $m$-loopy $\hat{G}^1$ which is not $m^*$-loopy. 
{
Let $\hat{G}_j$ be the $m^*$-loopy graph guaranteed by corollary \ref{lem:m*exists}, and let $S$ be the vertices on which $\hat{G}_j$ but not $\hat{G}^1$ has self-loops (namely, the sequential indicies $S=\{m+1, \cdots m^* \}$).  Next, let $G^*=\{V^*,E^*\}$ be the simple graph attained from $\hat{G}_j$ by deleting all self-loops, as in Figure \ref{altpath}.  
}


For each $u\in S$ there must be at least two vertices $l_u,r_u\in N(u)$ where $l_u, r_u \not \in N^*(u)$. Let $B = \bigcup_{u\in S} \{ l_u \cup r_u \}  $ and let $G' = \hat{G}^1$ except without self-loops and edges $(u,l_u)$ and $(u,r_u)$ for each $u\in S$.  Notice that $G'$ and $G^*$ have the same degree sequence, except at vertices $B$, where those in $G^*$ have a greater degree. 

Let $\Omega' = E'   \setminus E^* $ be the edges in $G'$ not in $G^*$ and let $\Omega^* = E^*\setminus E' $ be the edges in $G^*$ not in $G'$.  Now consider the edge disjoint cycles and paths which alternate between edges in $\Omega'$ and $\Omega^*$. Since the degrees of all vertices in $V\setminus B$ is the same in $G'$ and $G^*$,  there exists a decomposition that consists entirely of alternating cycles and alternating paths beginning and ending with edges in $\Omega^*$ at vertices in $B$.  We now consider three cases:

\begin{enumerate}
\item There exists an alternating cycle $C$, containing some edge of the form $\{(l_u,r_u) \}$: Let $C' = C \cap E'$ and $C^* = C \cap E^*$. Since the cycle is alternating, removing edges $C'$ from $\hat{G}^1$ and adding edges in $C^*$ to create a new graph $G$ is possible and preserves the degree sequence.  Further, since the graph of simple graphs is connected, there exists a sequence of double edge swaps to create $G$ from $\hat{G}^1$.  However, $G$ still contains edges $(u,l_u)$ and $(u,r_u)$ as these edges were precluded from set $\Omega'$, but since $(l_u,r_u)$ was in $C'$ it is not in $G$ and thus $(u,l_u)$ and $(u,r_u)$ form an open wedge $l_uur_u$ contradicting the maximality of $\hat{G}^1$.
\item There is an alternating path $L$ beginning and ending with edges in $\Omega^*$ at nodes $u,v\in B$ where $u\not = v$: Since $B\subseteq V^1$, lemma \ref{V1clique} grants that $(u,v)\in E'$ and thus not also in $\Omega^*$.  The union $(u,v) \cup L$ produces a cycle with edges alternatingly in  $E'$ and not in $E'$ and, as in the first case, augmenting $\hat{G}^1$ with this cycle produces a graph without a edge $(u,v)$, in violation of lemma \ref{V1clique} (Note, by classification $G^1$ cannot contain any edges in $V^0$).
\item There is an alternating path $L_l$ beginning and ending at the same vertex $l_u\in B$ and with edges in $\Omega^*$: Since $r_u$ has a lower degree in $G*$ than in $G'$, there must be some alternating path $L_r$ beginning at $r_u$.  Further, if the second case doesn't hold, then neither $L_l$ nor $L_r$ can visit any other vertex in $B$ other than $l_u$ and $r_u$ respectively.  Let $r_1$ be the first vertex in path $L_r$.  Since $(r_1,r_u) \in \Omega^*$, then by lemma \ref{V1clique} $r_1\in V^2$.  Next, if $(l_u,r_1)\in E'$, then notice that the union $(l_u,r_1) \cup L_l \cup (l_u,r_u) \cup (r_u,r_1)$ creates an alternating cycle that includes $(l_u,r_u)$, as in the first case.  If $(l_u,r_1)\not \in E'$ then the union of $(r_u,u)\cup (u,r_1)\cup L_r\setminus(r_u,r_1)$ creates an alternating cycle, and augmenting $\hat{G}^1$ with this cycle would create open wedge $l_uur_1$
\end{enumerate}

\end{proof}

\subsection{The structure of  $\hat{G}^2$}
A similar alternating path argument can be applied to $\hat{G}^2$, but in some ways it's easier to investigate $\hat{G}^2$ directly.  

First, notice that in a graph $\hat{G}^2$, $V^1$ is nonempty, since a vertex $u\in V^0$ with $k_u \ge 2$ must have two neighbors but since $V^0$ does not contain a triangle only one of $u$'s neighbors can be in $V^0$. {Since it is also possible that there are multiple pairs of connected vertices in $V^0$, we will let $K^0$ refer to any pair of connected vertices in $V^0$. We will show that each such $K^0$ shares the same connections into $V^1$. First though:}

\begin{lemma}
For $\hat{G}^d$, $d\ge 2$, for $u\in K^0$ and any $x\in V^0$ then $k_u\ge k_x$.  \label{d2V0degrees}
\end{lemma}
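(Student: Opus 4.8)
The plan is to compute $k_u$ exactly for $u\in K^0$ and then to bound $k_x$ for an arbitrary $x\in V^0$, splitting on whether or not $x$ lies in $K^0$.

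First I would pin down $k_u$ for $u\in K^0$. By lemma \ref{K0_edges} the induced subgraph on $V^0$ is a clique — necessarily $K^0$ — together with isolated vertices, so $u$ has exactly $d-1$ neighbors inside $V^0$. Any neighbor of $u$ outside $V^0$ lies at distance $1$ from $V^0$, hence in $V^1$, and by the observation following lemma \ref{d2edgesIn} we have $N(u)\cap V^1 = V^1_K$, of size $n_k$. Thus $k_u = (d-1)+n_k$ for every $u\in K^0$. In particular, if $x\in K^0$ then $k_x = k_u$ and we are done, so assume $x\in V^0\setminus K^0$; then $x$ is isolated inside $V^0$, so $N(x)\subseteq V^1$, and $N(x)$ is a clique by lemma \ref{K0_edges}.

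It then remains to show $k_x = |N(x)|\le (d-1)+n_k$. Since $|N(x)\cap V^1_K|\le |V^1_K| = n_k$, it suffices to prove $|N(x)\setminus V^1_K|\le 1$. Suppose instead there are distinct $w_1,w_2\in N(x)\setminus V^1_K$; as $N(x)\subseteq V^1$ and $V^1_K = N(K^0)\cap V^1$, neither $w_i$ is adjacent to any vertex of $K^0$. Choose distinct $u,v\in K^0$ (possible because $d\ge 2$). The swap $(x,w_1),(u,v)\leadsto (x,u),(w_1,v)$ is a valid swap of disjoint edges, since $x$ has no neighbor in $V^0$ and $w_1\notin N(K^0)$, and it creates no self-loop, so $x$ still lies in $V^0$ afterwards; there $x$ is adjacent to $u$ and still to $w_2$, while $u$ is not adjacent to $w_2$ (again $w_2\notin N(K^0)$), so $u\,x\,w_2$ is an open wedge at a vertex of $V^0$. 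The further swap $(u,x),(x,w_2)\leadsto (x,x),(u,w_2)$ then creates a self-loop, and neither swap destroyed one, so we have reached a graph connected to $\hat{G}^d$ with strictly more self-loops — contradicting the maximality in the definition of $\hat{G}^d$. Hence $|N(x)\setminus V^1_K|\le 1$, which gives $k_x\le n_k+1\le (d-1)+n_k = k_u$.

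The degree count for $K^0$ and the counting with $V^1_K$ are routine; the step that needs care is the case $N(x)\not\subseteq V^1_K$, i.e.\ $x$ attaching to two $V^1$-vertices outside $N(K^0)$, where the two-swap ``create an open wedge at $x$'' argument is precisely what prevents a non-clique vertex of $V^0$ from having larger degree than a clique vertex. In carrying it out one must verify that each proposed swap avoids multiedges — which it does exactly because $w_1,w_2\notin N(K^0)$ and $x$ has no neighbor in $V^0$ — and that $x$ stays in $V^0$ after the first swap, so that the second swap genuinely produces a new self-loop.
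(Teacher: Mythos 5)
There is a genuine gap at the very first structural step: you assert that by Lemma~\ref{K0_edges} ``the induced subgraph on $V^0$ is a clique --- necessarily $K^0$ --- together with isolated vertices,'' and you then rely on this to conclude that any $x\in V^0\setminus K^0$ is isolated inside $V^0$, so that $N(x)\subseteq V^1$. Lemma~\ref{K0_edges} only forces each connected component of the induced subgraph on $V^0$ to be a clique; it does not force there to be a single such clique. For $d\ge 3$ the single-clique statement is true but requires the separate swap argument of Lemma~\ref{d3noV0edges} (proved later, and needing three clique vertices), and for $d=2$ it is simply false: the degree sequence $\{6,2,2,2,2\}$ has $\hat{G}$ equal to a looped hub $a$ joined to four degree-$2$ vertices which pair up into two vertex-disjoint edges, so $V^0$ consists of two disjoint $2$-cliques and a vertex $x\in V^0\setminus K^0$ does have a neighbor in $V^0$. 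Your case split ``$x\in K^0$'' versus ``$x$ isolated in $V^0$'' is therefore not exhaustive. The lemma's conclusion still holds in the missing case --- the unlabeled lemma immediately after Lemma~\ref{K0_edges} gives $N(x)\cap V^1=N(u)\cap V^1$, so $k_x=|N(x)\cap V^0|+n_k\le(d-1)+n_k=k_u$ since no clique in $V^0$ exceeds size $d$ --- but your proof never makes that argument.

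The rest of your write-up is sound: the count $k_u=(d-1)+n_k$ for $u\in K^0$ is correct, and the two-swap contradiction showing that a $V^0$-vertex with no $V^0$-neighbor cannot have two neighbors outside $V^1_K$ is valid (the multiedge checks are exactly as you say). It is close in spirit to the paper's argument, which avoids the structural claim entirely: assuming $k_x>k_u$, it notes $x\notin N(u)\cup N(v)$ (else $k_x=k_u$ by Lemma~\ref{K0_edges}), counts to find two neighbors $y,z$ of $x$ outside $N(u)$ (using that $v\in N(u)\setminus N(x)$), and performs the single swap $(x,y),(u,v)\leadsto(x,u),(v,y)$ to open the wedge $zxu$. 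Because that route works directly from the hypothesis $k_x>k_u$, it covers the two-clique configuration automatically; your version needs the missing case added before it is complete.
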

\begin{proof}
Suppose to the contrary that there exists $x\in V^0$ with $k_x > k_u$.   Since $u\in K^0$ there exists $v\in N(u)\cap V^0$.  Lemma \ref{K0_edges} implies that $x\not\in N(u)\cup N(v)$ {as otherwise $u$ must have the same neighbors as $x$ contradicting that $k_x > k_u$.  Further, since $k_x>k_u$} there exists  $y,z\in N(x)$ with $y,z\not \in N(u)$. {Lemma \ref{K0_edges} again implies that $y\not \in N(v)$, as otherwise $y\in N(u)$. } Notice that swap $(x,y),(u,v) \leadsto (x,u),(v,y)$ creates open wedge $zxu$, a contradiction.
\end{proof}
\noindent {
Since, the definition of $\hat{G}^2$ requires that there is some $u\in V^0$ with $k_u\ge 2$, lemma \ref{d2V0degrees} also implies that  $k_u\ge 2$ for $u\in K^0$.
 Consider the following abbreviations:
\begin{definition}[$V^1_u$, $V^1_K$ and $n_k$]
\label{Vsubu}
In a graph $\hat{G}_i$, and $u\in V^0$, let $V^1_u = N(u)\cap V^1$.  For any clique $K$ in $V^0$, let $V^1_K = \bigcup_{u\in K} V^1_u$. Finally denote $n_k = |V^1_K|$.
\end{definition}
}


\begin{lemma}
For $\hat{G}^d$, $d\ge 2$, $u\in K^0$ and any $x$ then either $ V^1_u \subseteq N(x)$ or $N(x) \subseteq V^1_u$.  \label{d2edgesIn}
\end{lemma}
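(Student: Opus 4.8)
The statement is: for $\hat{G}^d$ with $d\ge 2$, fix $u\in K^0$ and an arbitrary vertex $x$; then either $V^1_u\subseteq N(x)$ or $N(x)\subseteq V^1_u$. Here $V^1_u = N(u)\cap V^1$. The strategy is a proof by contradiction using the ``no open wedge at a $V^0$-vertex, and no swap that creates one'' principle that underlies all the earlier lemmas. So suppose neither inclusion holds: there is $a\in V^1_u\setminus N(x)$ and $b\in N(x)\setminus V^1_u$. The goal is to exhibit a double edge swap (or short sequence of swaps) that preserves the simplified degree sequence but creates an open wedge centered at some vertex of $V^0$ — contradicting maximality of $\hat{G}^\bullet$.

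$\textbf{Key steps.}$ First I would dispose of the trivial cases: if $x\in V^0$ or $x$ is a self-loop-free neighbor structure already handled, use Lemma \ref{K0_edges} (every subgraph of $V^0$ is a clique plus isolated nodes) and Lemma \ref{K0V1Conn}/\ref{V1clique} (which for $d\ge 3$ say $K^0\cup V^1$ is a clique) to force one of the inclusions directly; for $d=2$ there is no such clique structure on $K^0\cup V^1$, so the real content is there. Second, the main case: with $a\in V^1_u\setminus N(x)$ and $b\in N(x)\setminus V^1_u$ in hand, note $(u,a)\in E$ and $(x,b)\in E$, while $(u,b)\notin E$ (since $b\notin V^1_u$ and $u\in K^0$, using that $u$'s only possible neighbors outside $K^0$ lie in $V^1$ — this needs Lemma \ref{noV3V3} and the $d=2$ structure) and $(x,a)\notin E$. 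So the swap $(u,a),(x,b)\leadsto (u,b),(x,a)$ — or its partner $(u,a),(x,b)\leadsto (u,x),(a,b)$ — is valid provided it creates no multiedge. I would then check that after this swap there is an open wedge at $u$: $u$ retains another neighbor $w$ (e.g. another vertex of $K^0$, which exists since $|K^0|=d\ge 2$, or a vertex of $V^1_u$), and $b\notin N(w)$ would need to be argued, or else use that $u$ now connects to $b$ but $a$ does not, giving wedge $a\,?\,b$ centered somewhere in $V^0$. The cleanest route is likely: pick $w\in K^0\setminus\{u\}$; if $b\notin N(w)$ the post-swap edges $(u,b),(u,w)$ form open wedge $b\,u\,w$, done; if $b\in N(w)$ for all such $w$, then $b$ is adjacent to all of $K^0$, so $b\in V^1_K$, and I would re-run the argument replacing the role of $u$ by an argument about $a$ versus the clique, or observe $b\in V^1$ after all, contradicting $b\notin V^1_u$ together with Lemma \ref{K0_edges} (which would force $a\sim b$ or similar). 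Finally, handle the case $x\notin V^0$ with $x$ having a self-loop by first doing $(x,x),(\cdot,\cdot)\leadsto\cdots$ as in Lemma \ref{exchange} to convert the self-loop into the needed free stubs, then apply the wedge-creating swap.

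$\textbf{Main obstacle.}$ The delicate point is the bookkeeping of \emph{which} edges are guaranteed absent so that the chosen swap is legal (no multiedge), and simultaneously guaranteeing that the resulting configuration genuinely has an open wedge at a vertex of $V^0$ rather than merely rearranging edges among $V^1$. In particular the subcase where $b$ turns out to be adjacent to \emph{all} of $K^0$ (so $b\in V^1_K$) but not to $u$'s full neighborhood in $V^1$ is the one that will require a separate short argument — probably combining Lemma \ref{K0_edges} applied to the pair $u,a$ (so $a$'s neighborhood must contain everything $u$ connects to inside the clique-structure) with the newly created edge to derive a contradiction. I expect the $d\ge 3$ half to follow almost immediately from $K^0\cup V^1$ being a clique (Lemmas \ref{K0V1Conn}, \ref{V1clique}), so the proof will really be about making the $d=2$ case airtight.
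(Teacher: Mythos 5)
Your overall strategy --- assume $a\in V^1_u\setminus N(x)$ and $b\in N(x)\setminus V^1_u$ and exhibit a double edge swap that manufactures an open wedge at a vertex of $V^0$, contradicting the maximality of $\hat{G}$ --- is the same as the paper's, but your writeup leaves the decisive steps open. The paper's proof begins by establishing two facts you skip: first that $x\notin N(u)$ (otherwise $xua$ is already an open wedge at $u$, since $a\in N(u)$ and $a\notin N(x)$), and then that $b\notin N(u)$ (if $b\in N(u)$ then $b\in V^0$ because $b\notin V^1_u$, and the edges $(b,x)$, $(b,u)$ together with $x\notin N(u)$ give an open wedge at $b\in V^0$). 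These are exactly what your argument needs and does not have: your justification that $(u,b)\notin E$ covers only $b\notin K^0$, but for $d=2$ the vertex $b$ could be $u$'s clique partner $v\in K^0$, in which case your swap $(u,a),(x,b)\leadsto(u,b),(x,a)$ creates a multiedge. Ruling this out requires precisely the two wedge arguments above (by Lemma \ref{K0_edges}, $v\in N(x)$ would force $x\in N(u)$). Likewise, your fallback swap $(u,a),(x,b)\leadsto(u,x),(a,b)$ can fail outright, since $(a,b)$ may already be an edge (e.g.\ whenever $b\in V^1$, because $V^1$ is a clique by Lemma \ref{V1clique}); and the subcase you defer (``$b$ adjacent to all of $K^0\setminus\{u\}$'') is vacuous once $b\notin N(u)$ is known, again by Lemma \ref{K0_edges} --- but you never establish that, instead promising ``a separate short argument.''

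Two further points. Your plan to dispatch $d\ge 3$ ``almost immediately'' from $K^0\cup V^1$ being a clique is not justified: for $x\notin K^0\cup V^1$ that clique structure says nothing about whether $N(x)\subseteq V^1$ (that is essentially Lemma \ref{d3onlyV1V2}, which is proved only later). The paper instead runs one uniform argument for all $d\ge 2$, using only the existence of $v\in N(u)\cap K^0$: it swaps the clique edge $(u,v)$ with $(x,b)$, i.e.\ $(x,b),(u,v)\leadsto(u,x),(v,b)$, whose validity follows cleanly from $x,b\notin N(u)$ and Lemma \ref{K0_edges}, and which leaves $(u,a)$ intact so that the open wedge $xua$ appears immediately. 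Your chosen swap can also be made to work --- after it, $(u,b)$ and $(u,v)$ form an open wedge at $u$ because $b\notin N(v)$, which again follows from $b\notin N(u)$ and Lemma \ref{K0_edges} --- but only after you prove $b\notin N(u)$; as written the argument has a genuine hole there.
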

\begin{proof}
{Consider the contradiction: that $V^1_u \not \subseteq N(x)$ because there exists $w\in V^1_u$, $w\not \in N(x)$ and $N(x) \not \subseteq V^1_u$ because there exists $y\in N(x)$, $y\not \in V^1_u$.} $x$ is not connected to $u$ as otherwise there exists open wedge $xuw$.   If $y\in N(u)$, then it must be that $y\in V^0$ (otherwise $y\in V^1_u$) and thus there exists open wedge $xyu$.  Thus $y\not \in N(u)$. 

As {$d\ge2$}, there exists $v\in N(u)\cap K^0$.  Since $y,x\not \in N(u)$ then by lemma \ref{K0_edges} $y,x\not \in N(v)$.  Now notice that swap $(x,y),(u,v)\leadsto(u,x),(v,y)$ creates open wedge $xuw$.
\end{proof}

Notice that this also gives that $V^3 = \emptyset$, that $k_y\ge k_x$ for any $y\in V^1_K$ and any $x$ and, in conjunction with lemma \ref{K0_edges}, that $V^1_u = V^1_K$ for any $u\in K^0$. 
Together {lemmas} \ref{d2edgesIn}, {and} \ref{d2V0degrees} give that all vertices $u\in V^0$ have at most one neighbor outside of $V^1_K$ { (for any $K$}), which is the key to the following lemma.  

\begin{lemma}
Every $m$-loopy $\hat{G}^2$, is $m^*$-loopy. \label{k2graph}
\end{lemma}
\begin{proof}
We will show that any $m$-loopy $\hat{G}^2$ is $m^*$-loopy by showing that the simplified degree sequence of $\hat{G}^2$ become a non-simple-graphical if a self-loop is added to any vertex in $V^0$. The proof will be based off the following principle: 
 For any simple graph and subset $U\subseteq V$, the sum of the degrees in $U$ can be no larger than $2$ times the possible number of edges internal to $U$ plus the number of edges coming into $U$.  Since the number of edges internal to $U$ is less than $\frac{1}{2}|U|(|U|-1)$ and an external vertex $w$ can contribute at most $\mathrm{min}(k_w,|U|)$ edges, then:\footnote{Notice that this is the necessary condition and the easier proof direction of the Erd\H{o}s-Gallai theorem.
 \begin{equation}
 \label{ErdosG_general}
 \sum_{u \in U} k_u \le |U|(|U|-1) + \sum_{w\not \in U} \mathrm{min}(k_w,|U|).
 \end{equation}}
We will let $U=V^1_K$, and show that for any given $m$-loopy $\hat{G}^2$ the number of edges are tight with the available degree, implying that adding any number of additional self-loops results in non-simple-graphical degree sequences.  
First, since each node in $V^1_K$ has a self-loop, the total available degree for edges in $V^1_K$ is $\sum_{u\in V^1_K} k_u -2=\sum_{u\in V^1_K}\bar{k}_u$. Next consider the edges with endpoints in $V^1_K$. 
Since $ V^1_K\subseteq V^1$, it is a clique and thus contributes $n_k(n_k-1)$ to the degrees inside $V^1_K$.  Lemmas \ref{K0_edges} and \ref{d2edgesIn} implies that for all $u\in K^0$, $u$ connects to all of {$V^1_K$}.  Further, lemma \ref{d2edgesIn} gives that for any remaining vertex $x$, either $N(x) \subseteq V^1_u$, in which case all $k_x$ edges from $x$ connect to $V^1_K$, or $V^1_K\subseteq N(x)$ in which case $x$ connects to all of $V^1_K$.   
 Aggregating these leads to the statement: 
\begin{equation}
\sum_{u\in V^1_K} \bar{k}_u = n_k(n_k-1)+\sum_{u\in K^0} (\bar{k}_u-1) + \sum_{u \in V\setminus (K^0\cup V^1_K)} \min(n_k,\bar{k}_u),  \label{ErdosG}
\end{equation}
where $\bar{k}_u$ is the simplified degree sequence (definition \ref{def:simplifiedegreesequence}) and $\bar{k_u}-1=k_u-1=n_k$ for $u\in K^0$.
Notice if any subset of vertices $S\in V^0$ have their degree reduced by $2$ then each vertex $u\in S$ connects to at least one less vertex in $V^1_K$.  Thus, reducing the degree of any vertex in $V^0$ by $2$ reduces the right side of equation \ref{ErdosG}, but not the left side, thereby inverting the inequality required by equation \ref{ErdosG_general}. Thus, 
the new degree sequence would not be simple-graphical implying that $\hat{G}^2$ is $m^*$-loopy. 
\end{proof}

Thus we have shown:
\begin{lemma}
A $m$-loopy graph $\hat{G}^d$, $d\le 2$, is $m^*$ loopy.  \label{masterLoopy}
\end{lemma}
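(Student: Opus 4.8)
The plan is to read Lemma~\ref{masterLoopy} as a short synthesis of the structural results already obtained for the low-clique cases, so its proof will be an exhaustive case split rather than a fresh argument. I would split according to whether $V^0$ contains a vertex of degree at least two.

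If every vertex of $V^0$ has degree at most one, then $\hat G$ is $m^*$-loopy by the observation made earlier that such a graph is automatically $m^*$-loopy: every vertex of degree $\ge 2$ already carries a self-loop, so no double edge swap can create another, and the self-loops sit exactly on the highest-degree vertices. This subcase also covers $d=0$, where $V^0=\emptyset$ and $\hat G$ consists entirely of self-loops.

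Otherwise there is some $u\in V^0$ with $k_u\ge 2$. Then $V^0\ne\emptyset$, so $d=|K^0|\ge 1$, and together with the hypothesis $d\le 2$ this forces $d\in\{1,2\}$; that is, $\hat G=\hat G^1$ or $\hat G=\hat G^2$. In the first case $\hat G$ is $m^*$-loopy by Lemma~\ref{k1graph}; in the second, by Lemma~\ref{k2graph}. These two cases exhaust the possibilities, so $\hat G$ is always $m^*$-loopy.

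For this particular statement there is essentially no obstacle: all of the analytic content lives in Lemmas~\ref{k1graph} and~\ref{k2graph} (the alternating-cycle argument and the Erd\H{o}s--Gallai tightness computation, respectively). The one point that requires care is the exhaustiveness of the split. The structure theory for $\hat G^2$ implicitly assumes $V^1\ne\emptyset$, hence the existence of a degree-$\ge 2$ vertex in $V^0$; a graph $\hat G$ with $d=2$ all of whose $V^0$-vertices have degree one does not meet that hypothesis and must be handled instead by the degree-at-most-one observation. Partitioning on whether ``some $u\in V^0$ has $k_u\ge 2$'' rather than purely on the value of $d$ is precisely what keeps the argument watertight.
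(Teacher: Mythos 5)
Your proof is correct and follows essentially the same route as the paper: the paper presents Lemma~\ref{masterLoopy} as an immediate synthesis of Lemma~\ref{k1graph} and Lemma~\ref{k2graph}, with the degenerate case (all vertices of $V^0$ of degree at most one) disposed of by the earlier ``clearly $m^*$-loopy'' observation, exactly as you do. Your explicit remark that the $\hat G^2$ structure theory presupposes a degree-$\ge 2$ vertex in $V^0$, so the split must be on that condition rather than on $d$ alone, is a worthwhile clarification of a point the paper leaves implicit.
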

However, as seen in Figure \ref{sampleGraphs} there exists $\hat{G}^d$, $d\ge 3$ which are not $m^*$-loopy.

\subsection{Structure of $\hat{G}^d$, $d\ge 3$}

Finally we investigate the possible structures of $\hat{G}^d$ for $d\ge 3$, showing that any $\hat{G}^3$ is in the class $Q_2$, any $\hat{G}^d$ for $d> 3$ is in the class $Q_1$ and thus $\hat{G}^d$, $d\ge 3$ indicates a disconnected $\mathcal{G}$.  First, we show the following:

\begin{lemma}
For $\hat{G}^d$, when $d\ge3$ all edges in $V^0$ are contained in $K^0$. \label{d3noV0edges}
\end{lemma}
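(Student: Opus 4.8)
The plan is to argue by contradiction. Suppose there is an edge $(a,b)\in E$ with $a,b\in V^0$ but $\{a,b\}\not\subseteq K^0$. Starting from $\hat{G}^d$ I will exhibit two double edge swaps whose composition produces a graph with one more self-loop than $\hat{G}^d$; since $\hat{G}^d$ has, by definition, the maximum number of self-loops among all graphs connected to it, this is the desired contradiction.

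The key first step is to isolate $(a,b)$ from $K^0$. Using Lemma \ref{K0_edges} I would first show that no vertex of $V^0\setminus K^0$ has a neighbor in $K^0$: if $z\in V^0\setminus K^0$ had a neighbor $u\in K^0\subseteq V^0$, then applying Lemma \ref{K0_edges} to $u$, its neighbor $z$, and each remaining clique vertex $x\in K^0\setminus\{u\}$ (each a neighbor of $u$, and $x\ne z$) would give $z\sim x$, so $z$ would be adjacent to all of $K^0$ and $K^0\cup\{z\}$ would be a clique in $V^0$ strictly larger than $K^0$, contradicting the maximality of $K^0$. Consequently, if either of $a,b$ lay in $K^0$ then by this fact so would the other, contradicting $\{a,b\}\not\subseteq K^0$; hence $a,b\in V^0\setminus K^0$, and in particular neither $a$ nor $b$ has any neighbor in $K^0$.

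Now I would use the hypothesis $d\ge 3$ to pick three distinct vertices $u,v,w\in K^0$. The edges $(u,v)$ and $(a,b)$ are disjoint (as $a,b\notin K^0$), and neither $(u,a)$ nor $(v,b)$ is present (as $a,b$ have no neighbor in $K^0$), so $(u,v),(a,b)\leadsto(u,a),(v,b)$ is a legal double edge swap; call the result $G$. In $G$ the edge $(u,w)$ is still present and so is the new edge $(u,a)$, while $(a,w)$ remains absent and $u$ still carries no self-loop; hence $(u,w),(u,a)\leadsto(u,u),(a,w)$ is a legal swap in $G$ that creates a self-loop at $u$. Composing the two swaps takes $\hat{G}^d$ to a graph connected to it with one more self-loop, contradicting maximality; therefore no edge of $V^0$ lies outside $K^0$.

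The only genuinely delicate point — and the reason the hypothesis $d\ge 3$ cannot be dropped — is the first step: Lemma \ref{K0_edges} by itself only forces the subgraph induced on $V^0$ to be a disjoint union of cliques, so it is the maximality of the particular clique $K^0$ that is needed to pin down that $a$ and $b$ are completely separated from $K^0$. Once that is known, a third clique vertex $w$ remains available to complete the open wedge $(u,w),(u,a)$ at $u$, which is exactly what would fail if $K^0$ had only two vertices. Everything after the first step is a routine check that the two swaps create no multiedge.
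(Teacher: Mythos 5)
Your proof is correct and follows essentially the same route as the paper's: both use Lemma \ref{K0_edges} together with the maximality of $K^0$ to conclude that the offending edge is completely disjoint from $K^0$, and then perform the swap $(u,v),(a,b)\leadsto(u,a),(v,b)$ to produce an open wedge at $u$ with a third clique vertex $w$. The only difference is cosmetic: you explicitly carry out the second swap that converts the wedge into a self-loop, whereas the paper stops at the wedge and invokes the maximality of $\hat{G}$.
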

\begin{proof}
Suppose not, then there exists $(x,y)\in V^0$ {where $(x,y)\not \in K^0$}. Since $(x,y) \not \in K^0$ then there is some $u\in K^0$ with $u\not \in N(x)$, and since $V^0$ is wedge free it must be that $x,y$ are disjoint from $K^0$.  Let $u,v,w\in K^0$, then swap $(x,y),(u,v)\leadsto (x,u),(y,v)$ creates wedge $xuw$.
\end{proof}

\begin{lemma}
{For $\hat{G}^d$ with $d\ge 3$, and any edge $(x,y)$, $x\ne y$ then either $x$ or $y$ must be in $V^0\cup V^1$.  If $d>3$, the above result holds even if $x=y$}
\label{d3onlyV1V2}
\end{lemma}
\begin{proof}
Suppose not, then there exists $(x,y)$ with both $x,y \not \in V^0\cup V^1$.  For $u,v,w\in K^0$ notice that swapping $(u,v),(x,y)\leadsto(u,x),(v,y)$ creates open wedges $xuw$ and $yvw$. {If $x\not =y$ then closing either $xuw$ or $yvw$ creates a graph with an additional self-loop.}
If $x=y$ and there are $u,v,w,z\in K^0$ then {the additional swaps $(x,u)(u,w)\leadsto (u,u)(x,w)$ and $(x,w)(w,z)\leadsto (w,w)(x,z)  $ create two new self-loops, compensating for the loss of the self-loop $(x,x)$}.  
\end{proof}

\begin{lemma}
For $\hat{G}^3$, either $V^\infty$ or $V^1$ is empty.  \label{d3V1Vinf}
\end{lemma}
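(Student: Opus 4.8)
The plan is to argue by contradiction using the extremal definition of $\hat{G}$: if a $\hat{G}^3$ had both $V^1\neq\emptyset$ and $V^\infty\neq\emptyset$, I would produce an explicit short sequence of double edge swaps, starting from $\hat{G}^3$, that strictly increases the number of self-loops, contradicting the fact that $\hat{G}^3\in\mathcal{V}$ has the maximum number of self-loops among graphs connected to it.

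First I would pin down the structure I need. Write $K^0=\{a,b,c\}$ for the size-$3$ clique in $V^0$, so $abc$ is a triangle with $k_a,k_b,k_c\ge 2$, and pick $z\in V^1$ and $p\in V^\infty$. By the remark following Lemma \ref{noV3V3}, every vertex disconnected from $V^0$ is an isolated self-loop, so the only edge incident to $p$ is the loop $(p,p)$. By Lemma \ref{K0V1Conn} (applicable since $d=3\ge 3$), $z$ is adjacent to all of $K^0$; I will only need $(b,z)\in E$. Finally $z,p\notin K^0$ and $z\neq p$ are immediate from the definitions of $V^0,V^1,V^\infty$.

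Then I would run three swaps from $\hat{G}^3$. (1) $(a,b),(p,p)\leadsto (a,p),(b,p)$; legal because $p$ had no neighbour other than itself, and afterwards the (still loop-free) vertex $a$ is the centre of an open wedge $cap$, since $(c,p)\notin E$. (2) Close it: $(a,c),(a,p)\leadsto (a,a),(c,p)$; legal because $a$ has no self-loop and $(c,p)\notin E$, and this puts a self-loop on $a$; afterwards $b$ is still loop-free, adjacent to both $z$ and $p$, and $(z,p)\notin E$, so $zbp$ is an open wedge. (3) Close it: $(b,z),(b,p)\leadsto (b,b),(z,p)$; legal because $b$ has no self-loop and $(z,p)\notin E$, and this puts a self-loop on $b$. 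Tallying the self-loops: step (1) destroys the loop at $p$, step (2) creates one at $a$, step (3) creates one at $b$, a net gain of one, contradicting the maximality of $\hat{G}^3$.

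I expect the main obstacle is realizing that a single swap does not suffice: the naive move just relocates the self-loop from $V^\infty$ into the triangle region with no net gain, so one must use the triangle of $K^0$ together with the $V^1$-vertex $z$ to harvest a \emph{second} new loop after paying back the one destroyed at $p$ — i.e.\ finding exactly the three-swap cascade above. The remaining work is routine bookkeeping: checking that the edges consumed by swaps (2) and (3) still exist and the edges created are not already present, which is easy here because swaps (1)–(2) never touch $z$ and swap (3) never touches $c$. The one genuine modelling subtlety is making sure $V^\infty$ really supplies a self-loop to consume in step (1), which is precisely what the remark after Lemma \ref{noV3V3} guarantees.
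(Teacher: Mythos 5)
Your proof is correct and is essentially the paper's argument: the paper also derives a contradiction by running a three-swap cascade that consumes the isolated self-loop in $V^\infty$ against an edge of the triangle $K^0$ and then closes two open wedges (one using the $V^1$ vertex guaranteed adjacent to $K^0$ by Lemma \ref{K0V1Conn}) to net one additional self-loop. The only difference is cosmetic — you use the $V^1$ vertex in the third swap where the paper uses it in the second — so the verification details are interchangeable.
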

\begin{proof}
Suppose not, then there exists $x\in V^\infty$ and $y\in V^1$. Let $u,v,w\in K^0$.  Lemma \ref{K0V1Conn} implies that $u,v,w\in N(y)$.  Consider the following swaps: $(x,x)(u,w)\leadsto (u,x)(w,x)$, then $(y,u)(u,x)\leadsto (u,u)(y,x)$ and $(v,w)(w,x)\leadsto(w,w)(v,x)$ which net creates a self-loop.  
\end{proof}

\begin{lemma}
For {$m$-loopy} $\hat{G}^d$, when $d=3$, all vertices in $V^2$ have degree $|V^1|+2$, while when $d > 3$, $V^2=\emptyset$  \label{d3V1degrees}.
\end{lemma}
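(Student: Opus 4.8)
The plan is to treat the two regimes $d>3$ and $d=3$ separately, in each case contradicting the defining property of $\hat G$ — that it has the maximum number of self-loops among graphs reachable from $G_i$ by double edge swaps — by exhibiting a short sequence of swaps that \emph{nets} one extra self-loop. The recurring difficulty is that the naive move ``spend the self-loop at a $V^2$ vertex, then close one open wedge'' merely relocates a self-loop and breaks even; one always needs to close a \emph{second} wedge, and the two cases differ precisely in where the raw material for that second wedge comes from.

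For $d>3$ I would prove the stronger claim $V^2=\emptyset$. Suppose $x\in V^2$. Then $x$ carries a self-loop and, being at distance $2$ from $V^0$, has no neighbour in $V^0$, in particular none in $K^0$. Since $|K^0|=d\ge 4$, fix four distinct clique vertices $u,v,w,t$ and perform: first $(x,x),(u,v)\leadsto(x,u),(x,v)$, spending the self-loop at $x$; then close the open wedge $xuw$ by $(u,x),(u,w)\leadsto(u,u),(x,w)$, creating a self-loop at $u$; then close the open wedge $xvt$ by $(v,x),(v,t)\leadsto(v,v),(x,t)$, creating a self-loop at $v$. Each swap is legal — the removed edges are either edges of the $K^0$-clique or edges produced by the previous step, the added edges are absent because $x$ began with no $K^0$-neighbour and $u,v$ began with no self-loop, and both target wedges are genuinely still open when they are closed — so the net change is $+1$ self-loop, contradicting maximality of $\hat G$.

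For $d=3$ this fails verbatim, since $K^0=\{u,v,w\}$ offers no fourth vertex, so instead I would show directly that $N(x)=V^1$ for every $x\in V^2$, which gives $k_x=|V^1|+2$ once the self-loop of $x$ is added in. That all non-self-loop edges at $x$ land in $V^1$ is immediate: $x$ has no $V^0$-neighbour (distance $2$), and by Lemma \ref{d3onlyV1V2} an edge from $x$ to another vertex of $V^{\geq 2}$ would have both endpoints outside $V^0\cup V^1$. For the reverse inclusion, suppose some $y\in V^1$ is not adjacent to $x$; by Lemma \ref{K0V1Conn} $y$ is adjacent to all of $u,v,w$. Now the self-loop sitting at $y$ substitutes for the missing fourth clique vertex: run $(x,x),(y,u)\leadsto(x,y),(x,u)$, then close the wedge at $u$ by $(u,x),(u,w)\leadsto(u,u),(x,w)$, then close the wedge at $w$ by $(w,x),(w,v)\leadsto(w,w),(x,v)$. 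As before one verifies the three swaps are legal — crucially, the self-loop at $y$ is never an endpoint of a swap, so no repeated self-loop appears — and the net change is $+1$ self-loop, a contradiction. Hence $V^1\subseteq N(x)$, and with the previous observation $k_x=|V^1|+2$.

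The step I expect to be the real obstacle is the design of those swap triples: one must arrange that the second wedge being closed is still open after the earlier swaps and that no swap ever produces a multiedge or a double self-loop. Once the $d>3$ triple is found, adapting it to $d=3$ by promoting a $V^1$-vertex's self-loop to a surrogate fourth clique vertex (and checking that that self-loop is never touched) is the only genuinely new idea; everything else is bookkeeping on which edges are present after each swap.
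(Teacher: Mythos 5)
Your proof is correct, but it takes a genuinely different route from the paper's. The paper disposes of the lemma by pure degree counting: a vertex $x\in V^2$ has all its non-loop edges into $V^1$ (by Lemma \ref{d3onlyV1V2} and $V^3=\emptyset$), so $k_x\le|V^1|+2$, while a vertex of $K^0$ has degree exactly $|V^1|+d-1$ (by Lemmas \ref{d3noV0edges} and \ref{K0V1Conn}); the standing WLOG drawn from Lemma \ref{exchange} --- every vertex outside $V^0$ has degree at least that of every vertex in $V^0$ --- then forces $|V^1|+2\ge|V^1|+d-1$ whenever $V^2\ne\emptyset$, which kills $d>3$ and pins the degree to $|V^1|+2$ when $d=3$. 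You instead exhibit explicit three-swap sequences that net one extra self-loop, contradicting the maximality of $\hat G$ directly; I checked both sequences and each intermediate graph is a legal loopy graph (no multiedge, no repeated self-loop), with net self-loop count $+1$ in each regime. The paper's argument buys brevity and handles both regimes with a single inequality, at the cost of leaning on Lemma \ref{exchange}; yours buys independence from Lemma \ref{exchange} (the $d>3$ case needs nothing beyond the definition of $V^2$ and four clique vertices, and the $d=3$ case needs only Lemmas \ref{K0V1Conn} and \ref{d3onlyV1V2}) and produces an explicit certificate of non-maximality in the spirit of the open-wedge arguments used elsewhere in the paper. One small quibble on framing: in your $d=3$ construction the self-loop at $y$ plays no role at all --- what substitutes for the missing fourth clique vertex is the \emph{edge} $(y,u)$ guaranteed by Lemma \ref{K0V1Conn}, and the second wedge you close is at $w$, chained through the newly created edge $(x,w)$, not anything involving $y$'s loop; the bookkeeping is right but the motivating sentence misdescribes it.
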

\begin{proof}
{Consider a vertex $u\in V^2$ and $v\in V^0$.}
	Since $V^3=\emptyset$ and $V^2$ has no internal edges by lemma \ref{d3onlyV1V2}, then ${k_u\le} |V^1|+2$, while ${k_v=} |V{^1}|+|K^0|-1 = |V^1|+d-1$. 
{
Since $\hat{G}^d$ is $m$-loopy, then $k_u\ge k_v$, implying that $|V^1|+2\ge k_u \ge k_v = |V^1|+d-1$.  Thus,   
}
$d=3$, vertices in $V^2$ have degree $|V^1|+2$, and when $d>3$,  $V^2$ must be empty.
\end{proof}

Taken together, lemmas \ref{d3V1degrees} and \ref{d3noV0edges}  imply that $\hat{G}^d$ for $d\ge 3$ is composed of a single large clique on $K^0\cup V^1$ along with vertices that solely connect into that clique.  Further, all vertices in $V^0$ have less than or equal degree than all other vertices.  Meanwhile, the degrees of vertices in $V^2$, and  $K^0$ are $|V^1|+2$ and $|V^1|+d-2$ respectively while those in $V^1$ and $V^0\setminus K^0$ have lower and upper bounds $|V^1| +|V^2| +d +1$ and $|V^1|$ respectively. Taken together, these constraints on the form of $\hat{G}^d$, $d\ge3$ (as summarized in Figure \ref{GdForm}), can be used to detect degree sequences for which $\mathcal{G}$ is disconnected.

\begin{theorem}
A graph $\hat{G}^3$ is in the class $Q_2$ . \label{G3inQ}
\end{theorem}
\begin{proof}
The first criterion in the definition of the class $Q_2$, that there exists $u$ such that $|N(u)\cap V^0|=2$,  is satisfied by the definition of $\hat{G}^3$.
Lemma \ref{d3noV0edges} establishes that there aren't edges in $V^0$ outside of $K^0$, and lemma  \ref{K0V1Conn} gives that any node in $K^0$ connects to all of $V^1$; together these satisfy the second criterion.  Lemma \ref{V1clique} shows hat $V^1$ is a clique, the third criterion of $Q_2$. Lemmas \ref{d3onlyV1V2} and \ref{d3V1degrees} imply that each $u\in V^2$ has $N(u) = V^1$, the fourth criterion of $Q_2$.  The last two criteria of $Q_2$ are satisfied by lemmas \ref{d3V1Vinf} and \ref{d3onlyV1V2}.  Thus, $\hat{G}^3\in Q_2$.
\end{proof}
\begin{theorem}
\label{GdinQ}
A graph $\hat{G}^d$ for $d> 3$ is in the class $Q_1$.
\end{theorem}
\begin{proof}
The first criterion in the definition of the class $Q_1$, the existence of  a clique inside $V^0$ is satisfied by the definition $\hat{G}^d$ for $d> 3$.  The second criterion of $Q_1$ is given by Lemma \ref{d3noV0edges}. The third criterion for $Q_1$, that $V^1\cup K^0$ is a clique, is given by lemmas \ref{V1clique} and \ref{K0V1Conn}. Finally, lemmas \ref{d3V1degrees}  and \ref{d3onlyV1V2} imply that in $\hat{G}^d$ for $d> 3$, $V^2=V^\infty = \emptyset$, the fourth criterion of $Q_1$. Thus, $\hat{G}^d\in Q_1$ for $d> 3$.
\end{proof}

An immediate consequence of {these two theorems}, along with lemmas \ref{masterLoopy} and \ref{lem:01mstar} and corollary \ref{GQ1Q2} is the following:

\begin{theorem}
A degree sequence has a disconnected $\mathcal{G}$ if and only if there is some graph in $Q_1$ or $Q_2$ in $\mathcal{G}$.  \label{Q1Q2Exact}
\end{theorem}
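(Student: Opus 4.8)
The plan is to assemble Theorem~\ref{Q1Q2Exact} from the machinery already in place, splitting into the two directions of the ``if and only if''. The ``if'' direction is immediate from Corollary~\ref{GQ1Q2}: if $\mathcal{G}$ contains a graph in $Q_1 \cup Q_2$, then $\mathcal{G}$ is disconnected, since that corollary produces an explicit graph $H$ (delete the $V^0$-cycles, add self-loops) which shares the degree sequence but violates criterion~1 of both classes, and $Q_1, Q_2$ are closed under double edge swaps by Lemmas~\ref{Q1closed} and~\ref{Q2closed}.

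For the ``only if'' direction, I would argue the contrapositive: suppose $\mathcal{G}$ contains no graph in $Q_1 \cup Q_2$; I want to conclude $\mathcal{G}$ is connected. Take any $G_i \in \mathcal{G}$ and pass to $\hat{G}_i$, the graph connected to $G_i$ with the maximum number of self-loops and, among those, the maximum number of edges inside $V^0$. Classify $\hat{G}_i$ by the size $d$ of its largest clique $K^0 \subseteq V^0$. If every vertex of $V^0$ has degree $0$ or $1$, then $\hat{G}_i$ is manifestly $m^*$-loopy. Otherwise there is a vertex of degree $\ge 2$ in $V^0$, and by Lemma~\ref{exchange} we may assume WLOG that $V^0$ carries the smallest degrees; then Lemma~\ref{masterLoopy} (i.e.\ Lemmas~\ref{k1graph} and~\ref{k2graph}) gives that $\hat{G}_i$ is $m^*$-loopy whenever $d \le 2$. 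The only remaining possibility is $d \ge 3$, but by Theorem~\ref{G3GdinQ} such a $\hat{G}_i$ lies in $Q_2$ (if $d = 3$) or $Q_1$ (if $d > 3$), contradicting our assumption that $\mathcal{G}$ contains no graph in $Q_1 \cup Q_2$. Hence every $G_i \in \mathcal{G}$ is connected to an $m^*$-loopy graph, and by Lemma~\ref{m*loopy} all $m^*$-loopy graphs in $\mathcal{G}$ are mutually connected, so $\mathcal{G}$ is connected.

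The two directions together give the equivalence. I do not expect a genuine obstacle here: the theorem is essentially a bookkeeping statement that ties together Corollary~\ref{GQ1Q2} (one direction) with the case analysis culminating in Lemma~\ref{masterLoopy} and Theorem~\ref{G3GdinQ} (the other direction). The only point requiring care is making explicit that \emph{every} component of $\mathcal{G}$ — not just the one containing a fixed $G_i$ — reaches an $m^*$-loopy graph, so that Lemma~\ref{m*loopy} can glue the whole space together; this is handled by the observation that the dichotomy ``$\hat{G}_i$ is $m^*$-loopy'' versus ``$\hat{G}_i \in Q_1 \cup Q_2$'' is exhaustive and applies to each $G_i$ separately.
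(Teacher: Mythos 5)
Your proposal is correct and follows exactly the route the paper intends: the paper states Theorem~\ref{Q1Q2Exact} as an immediate consequence of Corollary~\ref{GQ1Q2}, Lemma~\ref{masterLoopy}, and Theorem~\ref{G3GdinQ}, and your write-up simply makes that assembly explicit (including the degree-$\le 1$ case for $V^0$ and the gluing via Lemma~\ref{m*loopy}). No gaps.
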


\begin{corollary}
Aside from the degree sequences associated with the cycle and the clique all simple graphical degree sequences have a connected $\mathcal{G}$.
\end{corollary}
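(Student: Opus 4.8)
The plan is to derive the corollary from Theorem~\ref{Q1Q2Exact} together with a short Erd\H{o}s--Gallai computation. Since $\{k_i\}$ is simple-graphical, $\mathcal{G}(\{k_i\})$ is non-empty, and by Theorem~\ref{Q1Q2Exact} it is disconnected precisely when some $G\in\mathcal{G}(\{k_i\})$ lies in $Q_1\cup Q_2$. So it is enough to prove the contrapositive: if $\{k_i\}$ is simple-graphical \emph{and} some realization $G$ lies in $Q_1\cup Q_2$, then $\{k_i\}$ is the degree sequence of a clique or of a disjoint union of cycles (in each case possibly padded with isolated vertices) --- one of the two exceptional families. I would fix such a $G$ and split on whether $V^1=\emptyset$, treating $Q_1$ and $Q_2$ in turn.

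Suppose $G\in Q_1$. If $V^1=\emptyset$, then the distance labeling forces every $V^k$ with $k\ge1$ to be empty and condition~4 gives $V^\infty=\emptyset$, so $G$ has no self-loops and $V=V^0$; condition~2 then says each vertex is in the clique $K^0$ or is isolated, so $G$ is $K_{|K^0|}$ plus isolated vertices and $\{k_i\}$ is a clique degree sequence. If $V^1\neq\emptyset$, I would read the vertex degrees straight off the definition: by condition~2 the vertices of $V^0\setminus K^0$ send all their edges into $V^1$; by condition~3 each $u\in K^0$ has degree $|K^0|+|V^1|-1$, and each $w\in V^1$ has degree $(|K^0|+|V^1|-1)+2+t_w$, where $t_w$ is the number of $V^0\setminus K^0$--neighbors of $w$. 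In particular the $|V^1|$ vertices of $V^1$ are exactly the $|V^1|$ largest degrees. Evaluating the Erd\H{o}s--Gallai inequality $\sum_{i\le k}k_i\le k(k-1)+\sum_{i>k}\min(k_i,k)$ at $k=|V^1|$, the quantity $\sum_w t_w$ (the number of $V^1$--$(V^0\setminus K^0)$ edges) appears on both sides and cancels, and the surviving inequality collapses to $|V^1|+1\le|V^1|-1$, a contradiction; hence no such simple-graphical $\{k_i\}$ exists.

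The case $G\in Q_2$ is parallel. If $V^1=\emptyset$, then again $V^k=\emptyset$ for $k\ge2$, and by conditions~1, 2 and 6 every non-isolated vertex has degree $2$ (the degree-$2$ vertices of $V^0$ forming cycles and $V^\infty$ consisting of isolated self-loops), so $\{k_i\}$ is a cycle degree sequence. If $V^1\neq\emptyset$, condition~6 forces $V^\infty=\emptyset$; then each active vertex of $V^0$ and each vertex of $V^2$ has degree $|V^1|+2$, each $V^0$-vertex with no $V^0$-neighbor sends all its edges into $V^1$, and each $w\in V^1$ has degree $(|V^1|-1)+2+s+r+t_w$ with $s=|V^0_{\mathrm{active}}|\ge3$, $r=|V^2|$, and $t_w$ the number of $V^0$-isolated neighbors of $w$; as before $V^1$ occupies the top $|V^1|$ slots of the sorted sequence, and Erd\H{o}s--Gallai at $k=|V^1|$ again collapses --- after the $V^1$-to-$V^0$-isolated edges cancel on both sides --- to $|V^1|+1\le|V^1|-1$. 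Thus a simple-graphical $\{k_i\}$ with a $Q_1$ or $Q_2$ realization must be a clique or cycle degree sequence, giving the corollary.

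The main obstacle I anticipate is the second step of each case: extracting the exact degree of every vertex class from the (rather involved) definitions of $Q_1$ and $Q_2$ --- especially checking that the low-degree ``pendant'' vertices ($V^0\setminus K^0$ for $Q_1$, the $V^0$-vertices with no $V^0$-neighbor for $Q_2$) attach only to $V^1$, and confirming $V^1$ really sits at the top of the degree order --- so that Erd\H{o}s--Gallai can be applied at exactly $k=|V^1|$; once that bookkeeping is done, the cancellation of the $V^1$-to-pendant edges makes the contradiction immediate. A secondary point is pinning down what ``the degree sequence associated with the clique/cycle'' should mean, since the $V^1=\emptyset$ analyses yield a clique (resp.\ a union of cycles and self-loops) possibly together with isolated vertices.
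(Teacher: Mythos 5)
Your proposal is correct and follows essentially the same route as the paper, whose proof is a one-line sketch of exactly this argument: apply the Erd\H{o}s--Gallai inequality at $k=|V^1|$ to a $Q_1$ or $Q_2$ realization to rule out $V^1\neq\emptyset$, and observe that $V^1=\emptyset$ forces a clique (for $Q_1$) or a disjoint union of cycles and self-loops (for $Q_2$). Your write-up supplies the degree bookkeeping the paper omits, and the cancellation you describe does collapse the inequality to $|V^1|+1\le|V^1|-1$ in both cases.
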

\begin{proof}
Applying the Erd\H{o}s-Gallai theorem to the set $V^1$ in a graph in $Q_1$ or $Q_2$ reveals that such a graph's degree sequence is not simple-graphical, unless $|V^1|=0$, in which case the graph is either a clique, or has degree sequence $\{2,2,2,....,2 \}$. 
\end{proof}

Thus, many of the most commonly examined degree sequences have a connected $\mathcal{G}$.  However, in the space of loopy-graphs, there are many possible degree sequences which are loopy-graphical, but not simple-graphical (for example, those in Figure \ref{sampleGraphs}).  In this next section, we discuss several ways to detect if a loopy-graphical degree sequence has a connected or disconnected space.

\section{Detecting connectivity in $\mathcal{G}$} \label{detecting}

For many applications, detecting if a degree sequence is not at risk of being disconnected can be achieved simply by examining the maximum degree.  Let $n^*$ be the number of  nodes with nonzero degree in a degree sequence $\{k_i \}$.

\begin{theorem}
For degree sequence $\{k_i \}\ne\{2,2,2,...,2 \}$, if $\max_i k_i < 2\sqrt{n^*-3}+1$ then $\mathcal{G}(\{ k_i \})$ is connected. \label{maxDegTest}
\end{theorem}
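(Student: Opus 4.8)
The plan is to prove the contrapositive: if $\mathcal{G}(\{k_i\})$ is disconnected and $\{k_i\}\neq\{2,2,\dots,2\}$, then $\max_i k_i \ge 2\sqrt{n^*-3}+1$. By Theorem \ref{Q1Q2Exact} and Theorem \ref{G3GdinQ}, a disconnected $\mathcal{G}$ must contain some $\hat{G}^d$ with $d\ge 3$, which lies in $Q_1$ (if $d>3$) or $Q_2$ (if $d=3$); moreover, excluding the all-$2$'s sequence, the corollary above shows $|V^1|\ge 1$ in whichever class we land in (the $|V^1|=0$ cases are exactly the clique and the cycle, and the clique's degree sequence trivially satisfies the bound while the cycle is excluded by hypothesis). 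So I may assume we have a graph in $Q_1\cup Q_2$ with $V^1$ nonempty, and I want to exhibit a vertex of large degree.

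First I would locate the high-degree vertices: in both $Q_1$ and $Q_2$ the set $V^1$ (together with $K^0$ in the $Q_1$ case) forms a clique, and every vertex of $K^0$ is adjacent to all of $V^1$; additionally each vertex of $V^1$ may carry a self-loop and connect to vertices in $V^0\setminus K^0$ and (in $Q_2$) to $V^2$. The natural move is to apply the Erd\H{o}s--Gallai inequality restricted to $V^1$, exactly as in the proof of the corollary: because $V^1$ is a clique with self-loops, its simplified degrees are each at least $|V^1|+1$, and the Erd\H{o}s--Gallai condition at index $r=|V^1|$ forces enough edges leaving $V^1$ that the total vertex count $n^*$ cannot be too small relative to $|V^1|$ and $\max_i k_i$. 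Concretely, a vertex $v\in V^1$ has degree roughly $|V^1|+1$ from the clique and self-loop alone, and the "excess" demanded by Erd\H{o}s--Gallai must be absorbed by edges into $V^0\cup V^2$, i.e.\ into the remaining $n^* - |V^1|$ non-isolated vertices. Pushing this through should yield an inequality of the shape $n^* - 3 \ge \big(\tfrac{\max_i k_i - 1}{2}\big)^2$ or similar, which rearranges to $\max_i k_i \ge 2\sqrt{n^*-3}+1$.

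More explicitly, I expect to set $t=|V^1|$ and $\Delta=\max_i k_i$. Each of the $t$ vertices in $V^1$ has degree at least $t+1$ (clique edges plus self-loop), contributing total simplified degree at least $t(t+1)$ inside $V^1$, plus edges going out. The Erd\H{o}s--Gallai bound at $r=t$ says $\sum_{i=1}^{t}\bar k_i \le t(t-1) + \sum_{i>t}\min(\bar k_i, t)$, and since each term $\min(\bar k_i,t)\le t$ over the at most $n^*-t$ remaining non-isolated vertices, combined with the per-vertex lower bound this forces $t(t+1) \lesssim t(t-1) + t\cdot(\text{number of out-neighbors})$, giving a lower bound on how many vertices outside $V^1$ exist, hence on $n^*$. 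On the other hand the largest degree $\Delta$ occurs at a $V^1$ vertex (or a $K^0$ vertex), and is controlled by $t$ plus its out-degree; balancing "how big can $\Delta$ get for a fixed budget of extra vertices" against "how many extra vertices are forced" is an optimization in $t$ that produces the square-root. The main obstacle I anticipate is handling the $Q_1$ versus $Q_2$ cases uniformly and being careful about whether the maximizing high-degree vertex sits in $V^1$, in $K^0$, or is a $V^0\setminus K^0$ vertex attached to the whole clique — the bookkeeping of which vertices must exist (the minimum $|V^0|\ge 3$ or $|K^0|\ge 4$ constraints, and the "$-3$" in the bound surely comes from discarding three clique vertices) needs to be done with some care, but it is finite case analysis rather than anything deep, and the corollary's Erd\H{o}s--Gallai computation is essentially the template to follow.
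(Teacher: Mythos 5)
Your plan is correct and matches the paper's proof in essentially every respect: reduce to a graph in $Q_1\cup Q_2$ via Theorem \ref{Q1Q2Exact}, dispose of the $V^1=\emptyset$ cases as the clique and the cycle, then lower-bound the degree sum into $V^1$ (clique edges, three $K^0$/cycle vertices adjacent to all of $V^1$, and at least one edge from each of the remaining $n^*-3-|V^1|$ vertices), pigeonhole to get $\max_i k_i \ge |V^1|+1+\frac{n^*-3}{|V^1|}$, and minimize over $|V^1|$. The only cosmetic difference is that the paper performs this endpoint count directly rather than packaging it as an Erd\H{o}s--Gallai inequality at index $|V^1|$, which amounts to the same computation.
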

\begin{proof}
Since only degree sequences that can create a graphs in $Q_1$ and $Q_2$ have a disconnected graph of graph, we need only show that the maximum degree of graphs in  $Q_1$ and $Q_2$ is never less than  {$2\sqrt{n^*-3}+1$}.  For a graph $G\in Q_1 \cup Q_2$, let $\alpha = |V^1|$.  Notice that the highest degree node in $G$ must be in $V^1$. Counting the edges into $V^1$: at least three nodes in $K^0$ connect to all nodes in $V^1$ and the remaining $n^*-3-\alpha$ nodes have at least one edge into $V^1$.  Since $V^1$ is a clique, there are at least $\alpha(\alpha-1)+3\alpha+(n^*-3-\alpha)$ edge endpoints into $V^1$, and thus the maximum degree of a node in $V^1$ must be at least $\alpha+1+\frac{n^*-3}{\alpha}$.  Minimizing this over $\alpha$ yields the bound $ 2\sqrt{n^*-3}+1$
\end{proof}

When the maximum degree is larger than the bound in theorem \ref{maxDegTest}, { the following procedure can} exactly identify all degree sequences {that can create a $Q_1$ or a $Q_2$ other than $\{k_i\} = \{2,2,2,...,2 \}$ and $\{k_i\} =\{n-1,n-1,...,n-1 \}$.  Namely, the procedure either creates a $Q_1$ or $Q_2$ graph or terminates.  In the following, let $n_a$ and $n_b$ denote the number of vertices which have remaining degree equal to $a$ and $b$ respectively.}

\begin{enumerate}
\item If all remaining {non-zero} degrees have {exactly} two different values {$b> a\ge 3$}, $n_a\ge 3$, $b-2 = n_a+n_b-1$ and $a-2 = n_b$ then it is possible to place nodes with degree $b$ in $V^1$, three vertices with degree $a$ into $K^0$ and the remaining vertices with degree $a$ as vertices in $V^2$, creating a graph {$Q_2$}. 
\item If all remaining {non-zero} degrees have {exactly} two different values {$b>a\ge 3$}, $n_a\ge 3$,  $a = b-2$ and  $a = n_a+n_b-1$ then it is possible to create a clique with self-loops at each vertex of degree $b$, creating a graph {$Q_1$}.
\item {Let $u = \mathrm{argmin}_{i} \{k_i| k_i\not=0\}$}
\item {Connect $u$ to the $k_u$ largest degree vertices}
\item Reduce the largest $k_i$ degrees by $1${, and set $k_u=0$}
\item {Return to step 1.}
\end{enumerate}

To illustrate this procedure, consider the following example of degree sequence ${\{8,3,3,3,3,3,1\}}$ (the example $Q_2$ graph seen in figure \ref{FigQ1Q2} has this degree sequence).  As the degree sequence contains more than two different values, the first pass of the algorithm skips steps 1 and 2 and connects the degree $1$ vertex to the degree $8$ vertex, resulting in degree sequence $\{7,3,3,3,3,3,0\}$.  The degree sequence now only has two non-zero values, $7$ and $3$, so that in steps 1 and 2, $a=3$, $n_a=5$ and $b=7$ with $n_b=1$. The corresponding checks in step 1, check that $n_a = 5\ge 3$, that $b-2 = 5= n_a+n_b-1$, and that $a-2=1=n_b$.  Since the degree sequence passes each of these checks, it's possible to create a graph $Q_2$.  
Applying the procedure to the degree sequence $\{9,8,5,5,5,5,2,1 \}$ would first connect the degree $1$ vertex to the degree $9$ vertex and subsequently connect the degree the $2$ vertex to the first two vertices resulting in a new degree sequence $\{7,7,5,5,5,5,0,0 \}$.  Since this new degree sequence has only two non-zero values the procedure halts on the third iteration and creates the graph displayed as an example $Q_1$ graph in figure \ref{FigQ1Q2}.

The validity of this procedure is established below, while we provide full pseudo-code as Algorithm \ref{alg1} in the appendix.

\begin{theorem}\label{algCorrect}
For any degree sequence, the above procedure correctly identifies whether $\mathcal{G}$ is connected or disconnected. 
\end{theorem}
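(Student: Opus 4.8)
The plan is to reduce the claim to the realizability question settled by Theorem~\ref{Q1Q2Exact} and to show that Algorithm~\ref{alg1} decides exactly that question. By Theorem~\ref{Q1Q2Exact}, $\mathcal{G}(\{k_i\})$ is disconnected if and only if $\{k_i\}$ is the degree sequence of some graph in $Q_1\cup Q_2$, so it suffices to show that the algorithm outputs a graph precisely when such a realization exists and that the graph it outputs really is in $Q_1\cup Q_2$. First I would dispose of the boundary cases: if fewer than three vertices have positive degree then neither a clique on $\ge 4$ vertices nor a cycle on $\ge 3$ vertices fits inside $V^0$, so no $Q_1$ or $Q_2$ realization exists and returning \texttt{False} is correct; the sequences $\{2,\dots,2\}$ and $\{n-1,\dots,n-1\}$ are exactly the $|V^1|=0$ members of $Q_2$ and $Q_1$ (as in the corollary following Theorem~\ref{Q1Q2Exact}, obtained by applying Erd\H{o}s--Gallai to $V^1$), and the cycle and clique the algorithm returns in those branches meet the definitions of $Q_2$ and $Q_1$ by inspection.

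For the main loop I would lean on the structure of $\hat{G}^d$, $d\ge 3$, established in Section~\ref{ghat} and summarized in Figure~\ref{GdForm}. By Lemma~\ref{exchange}, if $\{k_i\}$ realizes any graph in $Q_1\cup Q_2$ then it realizes a canonical one whose $V^0$ occupies the lowest-degree vertices; in such a graph every vertex of $V^0\setminus K^0$ has degree at most $|V^1|$, is adjacent only to vertices of $V^1$, and has strictly smaller degree than every vertex outside $V^0$, so $V^1$ carries the top degrees. The heart of the argument is the following invariant for the peeling step: \emph{if the current degree sequence is realized by a canonical $\hat{G}^d$ with $d\ge 3$, $|V^1|\ge 1$, and $V^0\setminus K^0\ne\emptyset$, then after deleting a minimum-degree vertex $v$ and decrementing the $k_v$ largest remaining degrees the resulting sequence is again realized by such a graph on one fewer vertex.} This is a Havel--Hakimi-type exchange: since $v$'s $k_v$ neighbours lie in $V^1$ and $k_v\le|V^1|$, a short sequence of swaps that only re-attaches $V^0\setminus K^0$ vertices among the vertices of $V^1$ lets us assume $v$ is joined to the $k_v$ vertices of largest degree, whereupon $\hat{G}^d\setminus v$ realizes the decremented sequence and still satisfies every clause of $Q_1$ (for $d>3$) or $Q_2$ (for $d=3$) with $V^0\setminus\{v\}$ again the lowest-degree set. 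Iterating the invariant, the peeling strips off exactly $V^0\setminus K^0$ (and any degree-$0$ vertices), after which the remaining sequence has precisely two values $a<b$ with $n_a\ge 3$, $a\ge 3$ and the numerical relations the algorithm tests ($a=b-2$ and $a=n_t-1$ when $d>3$; $b-2=n_t-1$ and $a-2=n_b$ when $d=3$), so the matching branch fires.

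Conversely, if a branch fires then the graph the algorithm has assembled consists of the peeled vertices, each joined by the greedy step to high-degree vertices, together with either the explicit clique-with-self-loops (for $d>3$) or the explicit $\hat{G}^3$-type construction (for $d=3$); reading the numerical relations against the axioms shows directly that this graph lies in $Q_1\cup Q_2$ --- the self-looped vertices are $V^1$, the unlooped clique vertices are $K^0$, and each peeled vertex sits in $V^0$ with neighbours only in $V^1\cup K^0$ --- so $\mathcal{G}$ is disconnected. If instead the loop completes with no branch firing, or the algorithm finds $k_v$ larger than the number of remaining vertices, then the invariant rules out any canonical $\hat{G}^d$ with $d\ge 3$, so by Lemma~\ref{masterLoopy} every $\hat{G}_i$ is $m^*$-loopy, so by Lemma~\ref{m*loopy} all of $\mathcal{G}$ is connected, and returning \texttt{False} is correct.

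The main obstacle is the invariant of the second paragraph: the exchange argument must be made robust to ties among the top degrees and to the fact that the greedy attachment need not coincide with any single fixed realization, which is exactly where one uses the freedom to permute $V^0\setminus K^0$-to-$V^1$ attachments together with the degree bounds of Figure~\ref{GdForm}. A secondary but still necessary task is the line-by-line check that each of the three explicit output constructions satisfies every clause of the $Q_1$ and $Q_2$ definitions, including the boundary value $n_a=3$, where the clique construction in fact yields a $\hat{G}^3$-type ($Q_2$) rather than a $\hat{G}^d$-type ($Q_1$) graph.
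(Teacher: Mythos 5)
Your proposal is correct and follows essentially the same route as the paper's own proof: use the structural lemmas on $\hat{G}^d$, $d\ge 3$ (Figure \ref{GdForm}) to argue that the greedy peeling of minimum-degree vertices tracks an actual deletion of $V^0\setminus K^0$ from a canonical realization, then verify the two-value numerical tests and, conversely, that any returned graph is a valid $Q_1$/$Q_2$ (equivalently non-$m^*$-loopy) realization. Your treatment is somewhat more explicit than the paper's about the Havel--Hakimi-style exchange invariant and the $n_a=3$ boundary, but the underlying argument is the same.
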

\begin{proof}

 { Theorem \ref{Q1Q2Exact} implies that $\mathcal{G}(\{k_u \})$ is disconnected if and only if $\{k_u \}$ can construct a graph in $Q_1$ or $Q_2$. It will follow that correctness simply requires understanding the structure of $Q_1$ and $Q_2$ presented in figure \ref{FigQ1Q2} and attempting to naively construct such a graph.  If the construction succeeds, then clearly $\{k_u \}$ can create a $Q_1$ or $Q_2$, and if it fails, then we will show that no such construction is possible. 
}

To see the correctness of this procedure, we will simply run through the logic in reverse order.  
Suppose that $G$ is any $Q_1$ or $Q_2$ with degree sequence $\{k_u\}$, we will show the procedure produces a graph connected to $G$ by trivial edge-swaps. 
 As given by the definition of $Q_1$ or $Q_2$, and as apparent in figure \ref{FigQ1Q2}, if all vertices not contained in $V^1$, $K^0$ or $V^2$ and their attached edges are deleted from $G$, the remaining degree sequence has only two non-zero values, which we denote as $b>a\ge 3$.    Let $n_a$ and $n_b$ denote the number of vertices with degrees $a$ and $b$.  Consider these two cases corresponding to steps 1 and 2:
 \begin{enumerate}
\item If $G$ was a $Q_1$ then $V^1\cap K^0$ form a clique (condition 3 of $Q_1$).  Counting the number of edges incident to nodes inside $V^1$ and $K^0$ reveals that  vertices of degree $b$ compose $V^1$, those of degree $a$ compose $K^0$, $n_a\ge 3$, $b=a+2$ and $a =  n_a+n_b-1$. 
\item If $G$ was a $Q_2$ then $V^1$ is composed of vertices of degree $b$ and all remaining vertices in $K^0$ and in $V^2$ have degree $a$ (conditions 1-4 of $Q_2$).  It would thus follow from these same properties that $n_a\ge 3$, that each node in $V^1$ connects to all remaining nodes: $b-2 = n_a+n_b-1$, and that each node in $V^2$ or $K^0$ neighbors all of $V^1$ and has either two other neighbors or a self-loop $a-2 = n_b$.
\end{enumerate}
   Thus, steps 1 and 2 of the above procedure exactly identify a $Q_1$ and $Q_2$ which have had all vertices not contained in $V^1$, $K^0$ or $V^2$ deleted.

Thus, the only challenge is to identify which vertices are in $V^0\setminus K^0$, so that deleting these nodes allows for the tests in the first and second steps of the procedure.  Thankfully, this is not hard. The definition of $Q_1$ and $Q_2$ ensure that the vertices in $V^0 \setminus K^0$ can only connect into $V^1$, which guarantee that these vertices have the smallest degrees.  Similarly, the vertices in $V^1$ of a $Q_1$ and $Q_2$ graph must have a degree at least two higher than vertices in $V^0$ or $V_2$ (see figure \ref{FigQ1Q2}).  It follows immediately that if there are vertices in $V^0\setminus K^0$, then connecting the smallest degree vertex $u$, to the $k_u$ largest vertices, connects a vertex in $ V^0\setminus K^0$ to $k_u$ vertices in $V^1$.  

Since vertices in $V^0\setminus K^0$ have strictly smaller degree than those in $K^0$ then so long as there remain vertices in $V^0\setminus K^0$, the degree sequence has more than $2$ unique values.  Thus, repeating the procedure would eventually delete all vertices in $V^0\setminus K^0$.

The final concern to address is the possibility that while this procedure can create a $Q_1$ or $Q_2$ the particular choices of which vertices in $V^1$ connect to which vertices in $V^0\setminus K^0$ may cause the procedure to miss some $Q_1$ or $Q_2$.  We alleviate this concern by showing that if a degree sequence can construct a $Q_1$ or $Q_2$ then there are a sequence or edge swaps that could also create one with the same connections from $V^1$ to $V^0\setminus K^0$ as the procedure creates.
Notice that for two edges $(u,v)$ and $(x,y)$ with $u,x\in V^0\setminus K^0$ and $v,y\in V^1$, swap $(u,v),(x,y)\leadsto (u,y),(x,v)$ exchanges $u$ and $x$'s neighbors in $V^1$.  Thus, if a \joel{degree sequence} $\{k_u \}$ can create a $Q_1$ or $Q_2$ graph, then it can also create a $Q_1$ or $Q_2$ graph with any permutation of the edges from $V^0\setminus K^0$ to $V^1$, such as the one which was just constructed.  Thus this algorithm constructs a $Q_1$ or $Q_2$ if it is possible.  If it is not possible to construct a $Q_1$ or $Q_2$ then the procedure will eventually delete every vertex, or step 4 will fail because there will be fewer than $k_u$ non-zero degree vertices remaining.  

 \end{proof}

\section{Sampling loopy-graphs}

A small change to $\mathcal{G}$ can connect the space. For distinct $u,v,w$ consider the following triple edge swap, the `triangle-loop' swap, $(u,v),(v,w),(w,u)\leadsto(u,u),(v,v),(w,w)$ along with its reverse $(u,u),(v,v),(w,w)\leadsto(u,v),(v,w),(w,u)$.  Let $\mathcal{G}_\triangle$ be the graph $\mathcal{G}$ but with additional edges connecting graphs which are separated by a single triangle-loop swap {along with an additional number of self-loops in $\mathcal{G}_\triangle$ to preserve detailed balance (as discussed below)}.

\begin{theorem}
$\mathcal{G}_\triangle$ is connected. \label{tripleSwaps}
\end{theorem}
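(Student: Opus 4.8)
\textbf{Proof plan for Theorem \ref{tripleSwaps}.}

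The plan is to show that every loopy graph is connected in $\mathcal{G}_\triangle$ to an $m^*$-loopy graph, since Lemma \ref{m*loopy} already shows all $m^*$-loopy graphs are mutually connected (even using only double edge swaps). So fix an arbitrary $G\in\mathcal{G}$ and let $\hat G$ be the graph reachable from $G$ in $\mathcal{G}$ (double edge swaps only) with the maximal number of self-loops and, among those, the maximal number of edges in $V^0$, exactly as in Section \ref{ghat}. By Lemma \ref{masterLoopy}, if the largest clique $K^0\subseteq V^0$ has size $d\le 2$, then $\hat G$ is already $m^*$-loopy and we are done. The only remaining cases are $d\ge 3$, where by Theorem \ref{G3GdinQ} we know $\hat G^3\in Q_2$ and $\hat G^d\in Q_1$ for $d>3$; in both cases $\hat G$ contains a clique $K^0$ in $V^0$ of size $d\ge 3$, so in particular it contains a triangle $(u,v),(v,w),(w,u)$ with $u,v,w\in V^0$.

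The key step is then: apply the triangle-loop triple edge swap $(u,v),(v,w),(w,u)\leadsto(u,u),(v,v),(w,w)$ to three vertices of $K^0$. This strictly increases the number of self-loops, producing a graph $G'$ with one more self-loop than $\hat G$. Now repeat the whole construction: from $G'$ pass (via double edge swaps) to the maximal graph $\hat G'$ reachable from it, which again has strictly more self-loops than $\hat G$. Iterating, the number of self-loops strictly increases at each stage and is bounded above by $n$, so the process terminates, necessarily at a graph whose associated $\hat G$ has $d\le 2$ — hence is $m^*$-loopy by Lemma \ref{masterLoopy}. Chaining these moves shows $G$ is connected in $\mathcal{G}_\triangle$ to an $m^*$-loopy graph, and by Lemma \ref{m*loopy} to every $m^*$-loopy graph, hence to every graph in $\mathcal{G}$.

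The main thing to be careful about is the monotone-progress argument: one must confirm that the triangle-loop swap genuinely yields a graph not back in $Q_1\cup Q_2$ at the same self-loop count, i.e. that the self-loop count is the right potential function and strictly increases — this follows because $K^0\subseteq V^0$ means $u,v,w$ have no self-loops before the swap and three are created, with no self-loops destroyed. A secondary point is simply noting that the triangle-loop swap is always \emph{available} whenever $d\ge 3$, which is immediate since a clique on $d\ge 3$ vertices contains a triangle and all its edges are genuine (non-loop) edges. Everything else is bookkeeping: the categorization of $\hat G$ in Section \ref{ghat} did the real work, and Theorem \ref{Q1Q2Exact}/Lemma \ref{masterLoopy} guarantee the only obstructions to reaching an $m^*$-loopy graph are precisely triangles in $V^0$, which the triple swap removes.
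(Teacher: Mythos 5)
Your proposal is correct and follows essentially the same route as the paper: both arguments observe that any $\hat G^d$ with $d\ge 3$ contains a triangle in $V^0$ on which the triangle-loop swap is valid and strictly increases the self-loop count, so the maximal-self-loop graph reachable in $\mathcal{G}_\triangle$ must have $d\le 2$, hence is $m^*$-loopy by Lemma \ref{masterLoopy}, and Lemma \ref{m*loopy} finishes. Your version merely makes explicit the termination/potential-function bookkeeping that the paper leaves implicit.
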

\begin{proof}
Since every $\hat{G}^d$, $d\ge3$ contains a triangle in $V^0$, no such graph has the maximal number of self-loops. Thus for every degree sequence, any graph is connected to a $\hat{G}^d$ for $d\le 2$, which are $m^*$-loopy and since all $m^*$-loopy graphs are connected, the space is thus connected.
\end{proof}

This allows for an MCMC sampler of the uniform distribution of graphs in $\mathcal{G}_\triangle$.  For a given degree sequence, if Algorithm \ref{alg1} indicates that $\mathcal{G}$ is connected then the standard double edge swap MCMC in \cite{MRCconfiguration} suffices. On the other hand, if Algorithm \ref{alg1} returns a valid $\mathcal{G}^d$ for $d\ge 3$ then triangle-loop swaps are required to connected the space, as in Algorithm \ref{alg2}.  Stated succinctly the stub-labeled version does the following:   

From any graph $G$: with probability $\epsilon>0$ pick three edges from $G$, if possible perform a triangle-loop swap, otherwise resample $G$; with probability $1-\epsilon$ pick $2$ edges at random, if possible perform a double edge swap, otherwise resample $G$. 
  
  \begin{algorithm}
\caption{MCMC step (labeled stubs) }
\label{alg2}
\begin{algorithmic}
\REQUIRE loopy-graph $G$, stubs\_labels $\in [True,False]$
\ENSURE a loopy-graph adjacent to $G$ in $\mathcal{G}_\triangle$
\IF {$Unif(0,1)<  \epsilon$} 
	\STATE choose three edges at random
	\IF{edges create a triangle or are self-loops \& triangle-loop wouldn't create multiedges} 
		\STATE perform triangle-loop swap
	\ENDIF
\ELSE 
	\STATE choose two edges $e_1$ and $e_2$ at random
	\IF {double edge swap wouldn't create multiedges} 
		\IF{neither $e_1$ or $e_2$ is a self-loop OR not stubs\_labels} 
			\STATE perform double edge swap
		\ELSE 
			\IF {$Unif(0,1)<  \frac{1}{2}$}
				\STATE perform double edge swap
			\ENDIF
		\ENDIF
	\ENDIF
\ENDIF
\State \textbf{return } $G$
\end{algorithmic}
\end{algorithm}

  For any $\epsilon>0$, theorem \ref{tripleSwaps} gives that this procedure will be able to reach all graphs in $\mathcal{V}$, however, since the majority of proposed triple swaps will not result in a new graph, the value of $\epsilon$ that produces the optimal mixing time is likely small. 
   In order to see that triangle-loop swaps preserve {
detailed balance ($P(G,G') = P(G',G)$)
}
notice that since each triangle-loop swap is reversible {and} that at any graph $G$ each of the exactly ${m}\choose{3}$ sets of three distinct edges corresponds to an incoming edge, either from a {graph-of-graph} self-loop, or a valid triangle-loop swap.
  
 To see that $\mathcal{G}_\triangle$  is aperiodic consider several cases. Notice that for any degree sequence, if $|\mathcal{V}|\ge 2$, $\mathcal{G}_\triangle$  must contain a graph with at least one of the following: a triangle, an open wedge, two self-loops or two independent edges. Attempting to \joel{swap} two sides of a triangle or two self-loops would create a \joel{multiedge}, and this attempted swap corresponds to a self-loop in $\mathcal{G}_\triangle$, which implies that $\mathcal{G}_\triangle$  is aperiodic.  Any graph with an open wedge or two independent edges has a sequence of three double-edge swaps which return to the same graph, this combined with the reversible nature of double-edge swaps implies that $\mathcal{G}_\triangle$  is aperiodic.

This leads to the following theorem, which lets us conclude that Algorithm \ref{alg2} forms the basis for a  MCMC sampler of loopy-graphs.
\begin{theorem}
A random walk on $\mathcal{G}_\triangle$ has a uniform stationary distribution.
\end{theorem}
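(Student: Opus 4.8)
The plan is to establish that a random walk on $\mathcal{G}_\triangle$ is an irreducible, aperiodic Markov chain on a finite state space whose transition matrix is symmetric (equivalently, the underlying multigraph is regular), since a symmetric transition matrix has the uniform distribution as its stationary distribution. Irreducibility is already furnished by Theorem \ref{tripleSwaps}, and aperiodicity by the discussion immediately preceding this statement, so the real content to verify is that the walk, as specified by Algorithm \ref{alg2}, has equal transition probabilities in both directions across every edge of $\mathcal{G}_\triangle$, i.e. that the proposal mechanism is \emph{symmetric} and that the accept/reject rule is chosen to make the chain reversible with respect to the uniform measure.

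First I would fix the proposal counts. At any loopy-graph $G$ with $m$ edges (counting each self-loop once), the double-edge component picks an unordered pair of distinct edges, of which there are $\binom{m}{2}$, and the triangle-loop component picks an unordered triple of distinct edges, of which there are $\binom{m}{3}$; crucially these counts depend only on $m$, which is an invariant of the degree sequence, so every vertex of $\mathcal{G}_\triangle$ offers the same menu of proposals with the same base probabilities $1-\epsilon$ and $\epsilon$. Next I would check that each accepted move is reversible: a triangle-loop swap $(u,v),(v,w),(w,u)\leadsto(u,u),(v,v),(w,w)$ is undone by selecting exactly the triple of three self-loops $\{(u,u),(v,v),(w,w)\}$ and applying the reverse triangle-loop swap, and the paper already notes each of the $\binom{m}{3}$ triples at any graph corresponds to a unique incoming triangle-loop edge; likewise a double-edge swap $(u,v),(x,y)\leadsto(u,x),(v,y)$ is reversed by selecting $\{(u,x),(v,y)\}$ and swapping. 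Then I would handle the accept probabilities. For the triangle-loop part the move is deterministic once the triple is eligible, so its forward and backward proposal-plus-accept probabilities are both $\epsilon/\binom{m}{3}$. For the double-edge part, the only subtlety is the stub-labeled case: when a chosen pair contains a self-loop, Algorithm \ref{alg2} accepts with probability $\tfrac12$, and I would point out that this exactly compensates for the two-to-one correspondence between stub-pairings of a self-loop and the single edge object, so that in either the stub-labeled or vertex-labeled reading the forward rate equals the backward rate across every double-edge edge of $\mathcal{G}_\triangle$.

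With symmetry of the transition kernel in hand, the detailed-balance equations $\pi(G)P(G,H)=\pi(H)P(H,G)$ are satisfied by the constant vector $\pi\equiv 1/|\mathcal{V}|$, so the uniform distribution is stationary; combined with irreducibility (Theorem \ref{tripleSwaps}) and aperiodicity, the chain has the uniform distribution as its unique stationary distribution and a random walk on $\mathcal{G}_\triangle$ converges to it. The main obstacle I anticipate is the bookkeeping in the stub-labeled self-loop case: one must be careful that the factor-of-$\tfrac12$ in Algorithm \ref{alg2} lands on exactly the transitions where a self-loop is created or destroyed and nowhere else, and that the triangle-loop branch (which always involves self-loops on one side) is not inadvertently double-counted or mis-weighted relative to the double-edge branch; once the correspondence between stub-pairings and edge objects is written out cleanly, symmetry follows and the rest is immediate.
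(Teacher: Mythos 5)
Your proposal is correct and follows essentially the same route as the paper: the paper's one-line proof invokes aperiodicity, regularity, and connectivity of $\mathcal{G}_\triangle$, with the regularity justified in the preceding discussion by exactly the counting you perform (the $\binom{m}{2}$ edge pairs and $\binom{m}{3}$ edge triples at every graph, each corresponding to either a valid reversible swap or a self-loop of the chain). You simply make explicit the symmetry/detailed-balance bookkeeping, including the stub-labeled factor of $\tfrac12$, that the paper leaves implicit.
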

\begin{proof}
As an aperiodic, connected graph $\mathcal{G}_\triangle$, {with transitions that obey detailed balance, a random walk} has a unique uniform stationary distribution. 
\end{proof}

\section{Conclusion}

By examining the possible structures of graphs with the maximum number of self-loops reachable via double edge swaps we have a complete categorization of the degree sequences where double edge swaps can change any graph into any other valid graph. This understanding is exemplified in Algorithm \ref{alg1}, which can detect whether a degree sequence has a connected space or not. Further, we proved that augmenting double-edge swaps with triangle-loop swaps connects the space of loopy-graphs, creating the first provably correct MCMC technique for sampling loopy-graphs. In addition to filling a gap in the understanding of graph space connectivity, this work builds a tool to allow for the sampling of loopy-graphs and their subsequent use as statistical null-models. As greater emphasis is placed on sampling graphs without labeled-stubs the need for carefully sampling loopy-graphs will likely increase. 

\section{Acknowledgments}

This work was aided by comments and suggestions from: Bailey Fosdick, Dan Larremore and Johan Ugander.

\section{Appendix}

{In this appendix we formalize the procedure for determining whether a degree sequence has a connected $\mathcal{G}$.}

\begin{algorithm}
\caption{Attempt non $m^*$-loopy wiring}
\label{alg1}
\begin{multicols}{2}
\begin{algorithmic}[1]
\REQUIRE{degree sequence $\{ k_i\}$}
\ENSURE{a non $m^*$-loopy graph $\hat{G}$, otherwise $False$}
\STATE $n = |\{ k_i | k_i>0 \}|$ 
\STATE $G  \leftarrow$ graph initialized with vertices from $\{ k_i\}$ 
\STATE sort $\{ k_i\}$ in decreasing order 
\IF{$n \le 2$}  
\State \textbf{return } $False$
\ENDIF
\IF{$\min_i k_i = \max_i k_i = 2$}  
\State \textbf{return }{a cycle graph on $n$ vertices}
\ENDIF
\IF{$\min_i k_i = \max_i k_i = n-1$}  
\State \textbf{return }{a clique on $n$ vertices}
\ENDIF
\FOR {$j \in  0:n $} 
\IF{$\{ k_i\}$ has exactly two unique values} 
\STATE $a \leftarrow \min_i k_i$
\STATE $b \leftarrow \max_i k_i$
\STATE $n_a \leftarrow$ number of occurrences of $a$ in $\{ k_i\}$
\STATE $n_b \leftarrow$ number of occurrences of $b$ in $\{ k_i\}$
\STATE $n_t \leftarrow n_a+n_b$
\IF{$a\ge3$ and $n_a\ge3$}
\IF{ $a = b-2$ and $a = n_t-1$}
%
\FOR{$u\in 0:n_b$}
\STATE add edge $(u,u)$ to $G$ 
\ENDFOR
\STATE add clique on vertices $0:n_t$ to $G$ 
\State \textbf{return } $G$
\ENDIF 
\IF{ $b-2 = n_t-1$ and $a-2 = n_b$}
\FOR{$u\in 0:n_b$}
\FOR{$v\in 0:n_t$}
\STATE add edge $(u,v)$ to $G$ 
\ENDFOR
\ENDFOR
\FOR{$u\in 0:(n_t-3)$}
\STATE add edge $(u,u)$ to $G$ 
\ENDFOR
\STATE create clique on vertices $(n_t-3):n_t$ 
\State \textbf{return } $G$
\ENDIF
\ENDIF
\ENDIF
\STATE $MinInd \leftarrow {n-j-1}$
\STATE $MinDeg \leftarrow k_{MinInd}$
\STATE delete($k_{MinInd}$)
\IF{$MinDeg>  |\{ k_i\}|$} 
\State \textbf{return }{ False} 
\ENDIF
\FOR {$y \in 0:MinDeg$}
\STATE $k_y \leftarrow k_y -1$
\STATE add edge $(MinInd,y)$ to $G$
\ENDFOR
\STATE sort $\{ k_i\}$ in decreasing order
\ENDFOR
\State \textbf{return } False
\end{algorithmic}
\end{multicols}
\end{algorithm}

\bibliography{referencesJN}{}
\bibliographystyle{plain}

\end{document}